\newcommand{\proj}{\tn{P}}
\newcommand{\cont}{\mc{C}}
\newcommand{\LL}{\mathrm{L}}
\newcommand{\CC}{\mathrm{C}}
\newcommand{\partA}{\mc{J}_A}
\newcommand{\partB}{\mc{J}_B}
\newcommand{\partk}{\mc{J}_k}
\newcommand{\partGeneric}{\mc{J}}
\newcommand{\Lebesgue}{\mathcal{L}}
\newcommand{\flows}{\mc{F}}
\newcommand{\pihat}{\hat{\pi}}
\newcommand{\nuhat}{\hat{\nu}}
\newcommand{\neigh}{\mathcal{N}}
\newcommand{\cbar}{\bar{c}}
\newcommand{\piInit}{\pi_{\rm init}}
\newcommand{\Jhat}{\hat{J}}
\newcommand{\bmpi}{\bm{\pi}}
\newcommand{\bmomega}{\bm{\omega}}
\newcommand{\DomDecMPI}{\textsc{DomDecMPI}}
\newcommand{\DomDecGPU}{\textsc{DomDecGPU}}
\newcommand{\SinkhornGPU}{\textsc{SinkhornGPU}}
\newcommand{\partBatch}{\tilde{\partGeneric}}
\newcommand{\indexBatch}{\tilde{I}}
\author{Ismael Medina, Bernhard Schmitzer}
\title{Flow updates for domain decomposition of entropic optimal transport}
\begin{document}
\maketitle
\begin{abstract}
Domain decomposition has been shown to be a computationally efficient distributed method for solving large scale entropic optimal transport problems. However, a naive implementation of the algorithm can freeze in the limit of very fine partition cells (i.e.~it asymptotically becomes stationary and does not find the global minimizer), since information can only travel slowly between cells. In practice this can be avoided by a coarse-to-fine multiscale scheme. In this article we introduce flow updates as an alternative approach. Flow updates can be interpreted as a variant of the celebrated algorithm by Angenent, Haker, and Tannenbaum, and can be combined canonically with domain decomposition. We prove convergence to the global minimizer and provide a formal discussion of its continuity limit. We give a numerical comparison with naive and multiscale domain decomposition, and show that the flow updates prevent freezing in the regime of very many cells.
While the multiscale scheme is observed to be faster than the hybrid approach in general, the latter could be a viable alternative in cases where a good initial coupling is available.
Our numerical experiments are based on a novel GPU implementation of domain decomposition that we describe in the appendix.
\end{abstract}
\section{Introduction}
\label{sec:introduction}
\subsection{Motivation}
\label{sec:motivation}

\paragraph{(Computational) optimal transport.} Optimal transport is a fundamental optimization problem with profound connections to various branches of mathematics, and applications in image analysis and machine learning. Given two probability measures $\mu$ and $\nu$ on spaces $X$ and $Y$ and a measurable cost function $c: X\times Y \rightarrow \R$, the optimal transport problem consists in finding the optimal probability measure $\pi$ in the set of transport plans $\Pi(\mu, \nu)$ --- probability measures on $X \times Y$ with marginals $\mu$ and $\nu$ --- that solves
\begin{equation}
	\label{eq:unregularized-ot}
	\inf_{\pi \in \Pi(\mu, \nu)}
	\int_{X\times Y} c(x,y) \diff \pi(x,y).
\end{equation}
Thorough expositions can be found in \cite{Villani-OptimalTransport-09} and \cite{SantambrogioOT}. The field has seen immense progress in the development of numerical algorithms, such as solvers for the Monge--Amp\`ere equation \cite{ObermanMongeAmpere2014}, semi-discrete methods \cite{LevySemiDiscrete2015,KiMeThi2019}, and multiscale methods \cite{MultiscaleTransport2011,SchmitzerSchnoerr-SSVM2013}.
An introduction to computational optimal transport, an overview on numerical methods, and applications can be found in \cite{PeyreCuturiCompOT}.

An important variant of the above problem is to add entropic regularization of the transport plan, i.e.~solving
\begin{equation}
\inf_{\pi\in\Pi(\mu, \nu)}
\int_{X\times Y} c(x,y) \diff \pi(x,y) 
+
\veps \KL(\pi | \mu \otimes \nu)
\end{equation}
where $\KL$ is the Kullback-Leibler divergence (see \eqref{eq:KL} for a definition) and $\veps>0$ is a positive regularization parameter.

This problem has very convenient analytical and computational advantages with respect to \eqref{eq:unregularized-ot}: it is strictly convex, differentiable with respect to the marginal distributions, and comes with an efficient, relatively simple, and GPU friendly numerical solver: the Sinkhorn algorithm, see for instance \cite{Cuturi2013,PeyreCuturiCompOT,FeydyDissertation} and references therein. 
A mature library for GPU implementation is presented in \cite{feydy_keops,feydy_geomloss}.
Exemplary references for the analysis of the limit $\veps \to 0$ are \cite{Cominetti-ExpBarrierConvergence-1992,LeonardSchroedingerMK2012,Carlier-EntropyJKO-2015}.
Entropic regularization is also useful for the analysis of stochastic processes and for stochastic optimal control, see for instance \cite{Chen2016,Lavenant2021,BarLav21}.

\paragraph{Domain decomposition for optimal transport.} Large optimal transport problems can be addressed with domain decomposition. Following \cite{BenamouPolarDomainDecomposition1994, BoSch2020}, the strategy of the algorithm is the following: let $\partA$ and $\partB$ be two partitions of $X$, which need to overlap in a suitable sense. Then, given an initial feasible coupling $\pi^0 \in \Pi(\mu, \nu)$, optimize its restriction to each set of the form $X_J\times Y$, where $X_J$ are the subdomains in the partition $\partA$, leaving the marginals on each of the restrictions fixed. 
That is, defining $\pi_J$ as the restriction of $\pi^0$ to $X_J \times Y$, and $\mu_J$ and $\nu_J$ respectively the $X$- and $Y$- marginals of $\pi_J$, the subplan $\pi_J$ is replaced by the optimizer of 
	\begin{equation}
	\nonumber
		\min_{\pi \in \Pi(\mu_J,\iter{\nu_J}{})}
		\int_{X_J \times Y} c\,\diff \pi + \veps\,\KL(\pi|\mu_J \otimes \nu).
	\end{equation}
This can be done independently for each subdomain in the partition, so the algorithm can be easily parallelized. Then repeat this procedure on partition $\partB$, and keep alternating between partitions until convergence.

Convergence of domain decomposition for unregularized optimal transport \eqref{eq:unregularized-ot} with the squared Eulidean distance cost is shown in \cite{BenamouPolarDomainDecomposition1994} and \cite{asymptotic_domdec_arxiv}, under different assumptions on the overlap between partitions. For entropy-regularized optimal transport, \cite{BoSch2020} shows linear convergence under minimal assumptions on the cost, marginal and partitions. Besides, \cite{BoSch2020} proposes an efficient, parallel, multiscale implementation for two-dimensional grids that outperforms a state-of-the-art sparse Sinkhorn solver even with a single worker.
Indeed, while the concept of domain decomposition (and the flow updates introduced in this article) is in principle applicable to unstructured point clouds in any dimension, the method will be most practical on low-dimensional grids, where an efficient partition structure is readily available.

In \cite{asymptotic_domdec_springer} we studied the asymptotic behavior of domain decomposition in the limit of small subdomains, with the motivation of providing an analogous analysis to what \cite{Berman-Sinkhorn-2017} obtained for the Sinkhorn algorithm.
Let $X=[0,1]^d$, let $(\pi^{n,k})_k$ be the domain decomposition iterates where the subdomains are hypercubes of edge length $1/n$, and define $\bmpi^n(t) \assign \pi^{n, \lfloor nt \rfloor}$, that is, each iterate is assigned a time step of $1/n$. Then under suitable assumptions, the trajectories $(\bmpi^n)_n$ converge to a limit trajectory, which solves the horizontal continuity equation
\begin{align*}
	\partial_t \bmpi_t + \ddiv_X \bmomega_t = 0 \qquad \tn{for $t>0$ and}
	\qquad \bmpi_{t=0} = \piInit
\end{align*}
where the horizontal momentum field $\bmomega_t$ describes how mass moves in $X$, while retaining the same position in $Y$ and $\ddiv_X$ is the corresponding divergence operator. $\bmomega_t$ can be obtained from an asymptotic version of the domain decomposition subdomain problem. As opposed to \cite{Berman-Sinkhorn-2017}, however, in the asymptotic limit not all initializations evolve to the optimal coupling but may become stationary in sub-optimal states. This is referred to as \emph{freezing}.

\paragraph{Freezing in domain decomposition.} Even though under the assumptions of \cite{BenamouPolarDomainDecomposition1994}, \cite{BoSch2020} or \cite{asymptotic_domdec_arxiv} every sequence of iterates $(\pi^{n,k})_k$ must converge to the optimal coupling, the rate may deteriorate with increasing number of subdomains even in smooth problem instances.
A striking example of this (see Figure \ref{fig:freezing} for an illustration) is when the initial coupling is given by $\pi^0=(\id,T)_{\#} \mu$ for some non-optimal transport map $T$, such that $T$ has non-zero curl, e.g.~when $T$ contains some \emph{non-local rotation}. Indeed, the number of required iterations for domain decomposition to undo such a non-local rotation increases with the number of partition cells, the intuitive reason being that information can only travel by one cell per iteration.
Based on the experiments in \cite{BoSch2020}, this freezing behaviour does not seem to be an issue when domain decomposition is used in combination with a multiscale scheme, as with the latter, macroscopic curl is removed from the initial coupling efficiently at a coarse resolution scale and thus at the finer levels essentially only local updates to the coupling are necessary.
However, as of now, there is no theoretical description of this behaviour.

\begin{figure}[hbt]
	\centering
	\includegraphics[width=\linewidth]{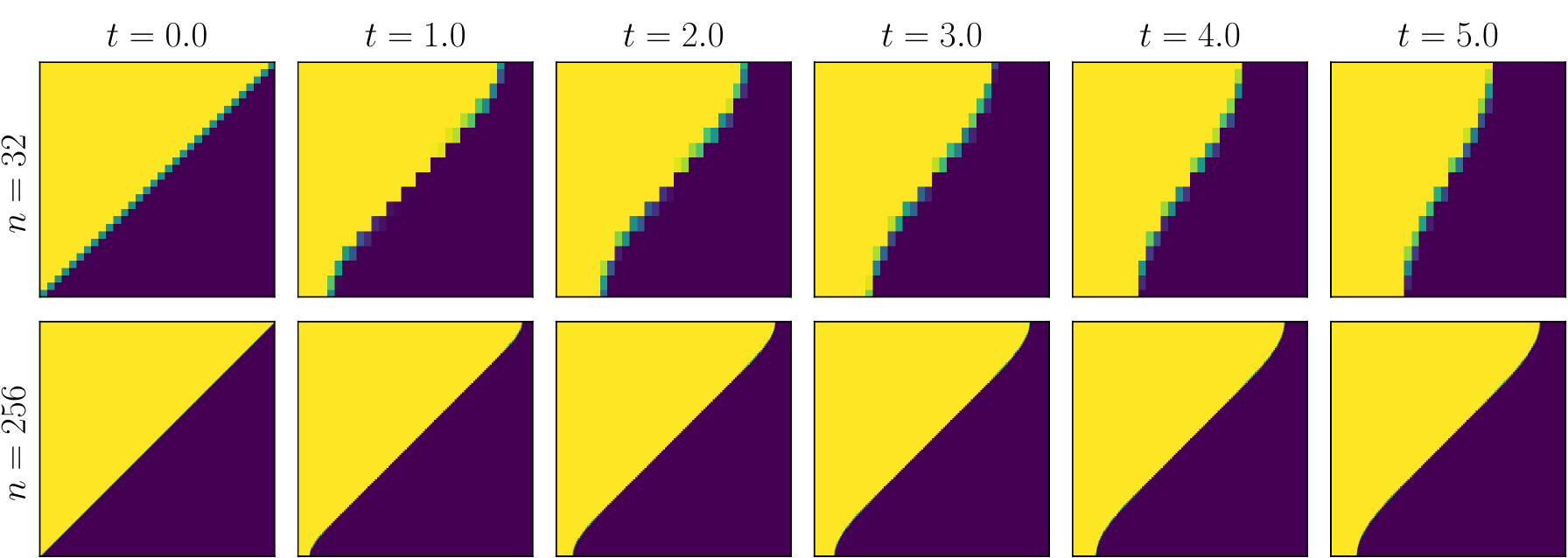}
	\caption{An example of the freezing behaviour in domain decomposition. For $\mu = \Lebesgue \restr [-1/2,+1/2]^2$, $\nu = \tfrac{1}{2} [\delta_{(-1/2, 0)} + \delta_{(+1/2, 0)}]$, we visualize the couplings $\pi^{n,k}$ by colouring the domain decomposition cells that are mapped to $(-1/2, 0)$ in yellow and those mapped to $(+1/2, 0)$ in dark blue; non-deterministic assignments feature an intermediate color.
	We show two domain decomposition trajectories at different resolutions, for an initialization that features a global rotation with respect to the optimal configuration, in this case corresponding to a vertical interface. 
	Both trajectories evolve towards the optimal coupling, but the convergence rate deteriorates drastically as the resolution increases.}
	\label{fig:freezing}
\end{figure}

\paragraph{Curl in optimal transport and the AHT scheme.} The relation between curl and optimal transport is well known since the seminal work of Brenier \cite{MonotoneRerrangement-91}, who showed that for  $\mu$ absolutely continuous with respect to the Lebesgue measure $\Lebesgue$ (in fact, a slightly weaker but more technical condition is shown to be sufficient in \cite{MonotoneRerrangement-91}), any mass rearrangement map $T$ can be uniquely decomposed as $T = \nabla \phi \circ s$, for some convex function $\phi$ and a $\mu$-measure preserving map $s$, i.e.~$s_\# \mu = \mu$, where $s_\#\mu$ denotes the pushforward of measure $\mu$ by the map $s$. $\nabla\phi$ is then the optimal transport map from $\mu$ to $T_\#\mu$ with respect to the squared Euclidean distance.

Based on this insight, Angenent, Haker, and Tannenbaum proposed an algorithm \cite{OptimalTransportWarpingTheory} (also known as AHT scheme) for computing the optimal map $\nabla\phi$.
Given $\mu$ and $\nu$, starting from some initial feasible transport map $T_0$ with $T_{0\#}\mu=\nu$, one (formally) constructs a flow of maps
\begin{equation}
\label{eq:AHTFlowGeneral}
T_t = T_0 \circ s_t^{-1} \qquad \tn{with} \qquad s_0=\id, \qquad \partial_t s_t = v_t \circ s_t,
\qquad \ddiv(v_t \cdot \mu)=0
\end{equation}
where $v_t$ is a Eulerian velocity field and the divergence constraint ensures that $s_{t\#}\mu=\mu$ and therefore $T_{t\#}\mu=\nu$ at all times $t$. Among admissible velocity fields, $v_t$ is chosen such that the transport cost
\begin{equation}
\label{eq:AHTObjectiveDerivative}
E(T) \assign \int_X c(x,T(x))\,\diff \mu(x)
\end{equation}
is decreasing in time. Formally one finds for $T_t$ as in \eqref{eq:AHTFlowGeneral} that
$$\partial_t E(T_t) = \int_X \langle \nabla_X c(x,T_t(x)), v_t(x) \rangle \diff \mu(x).$$
The notion of steepest descent then depends on the choice of a metric on the space of velocity fields.
For instance, the steepest descent with respect to the $\LL^2(\mu)$-metric would be given by setting $v_t$ to the projection of the vector field $x \mapsto -\nabla_X c(x,T_t(x))$ onto the subspace satisfying $\ddiv(v_t \cdot \mu)=0$ in $\LL^2(\mu)$.
In \cite{OptimalTransportWarpingTheory} a slightly different choice is proposed: One reparametrizes $v_t(x)=u_t(x)/\mu(x)$ (where $\mu(x)$ denotes the Lebesgue density of $\mu$ at $x$). $u_t$ is then set to be the $\LL^2(\Lebesgue)$-projection of $x \mapsto -\nabla_X c(x,T_t(x))$ onto the subspace satisfying $\ddiv(u_t \cdot \Lebesgue)=0$. This can be computed efficiently by solving a standard Poisson equation.

When $c$ is the squared Euclidean distance, at a critical point of $E$ the map $T_t$ must be a gradient of some potential $\phi$, and if the Jacobian of $T_t$ happens to be positive semi-definite, $\phi$ will be convex, i.e.~$T_t$ must be the sought-after optimal transport map.

Despite its elegance, the AHT scheme is in general not guaranteed to be well-posed (i.e.~the initial value problem  \eqref{eq:AHTFlowGeneral} where $v_t$ is some $\LL^2$-projection of $-\nabla_X c(\id,T_t)$ is not necessarily well-posed) or to converge to the optimal transport map, because for stationary $T_t=\nabla \phi$, $\phi$ will not necessarily be convex (except in particular cases and under strong assumptions, see \cite[Proposition 6.3.]{SantambrogioOT}, \cite[Section 1.3]{Brenier2009_AHT}).
Optimization based on re-arrangements $s_t$ will also not be applicable to the case of general transport plans $\pi \in \Pi(\mu,\nu)$ that are not concentrated on graphs of maps, as for instance in entropic transport.

However, the AHT scheme provides a powerful intuition as well as appealing properties, such as the score being non-increasing, with the rate of decrement of the score related to the curl component of the current transport map. 

\subsection{Contributions and outline}
\label{sec:contribution}
\paragraph{A hybrid scheme.} We observe that the strengths of domain decomposition and the AHT scheme are complementary. On the one hand, (entropy regularized) domain decomposition converges to the globally optimal plan but is slow at resolving curl. On the other hand, the AHT scheme removes curl quickly, but is not guaranteed to be well-posed or to achieve optimality. In this article we present a hybrid of the two methods that will always converge to the global minimizer and is efficient at removing curl.

Several issues must be resolved for this combination. In particular the AHT scheme needs to be discretized consistently at the level of the domain decomposition partitions $X_J$, and it must be adapted to work with entropic plans from domain decomposition, whereas the original AHT scheme was based on transport maps for unregularized transport.

\paragraph{Flow updates.} In Section \ref{sec:hybrid-scheme} we present a hybrid scheme for (entropic) optimal transport where domain decomposition updates are intertwined with \textit{flow updates}, which can be interpreted as a min-cost flow adaptation of the AHT scheme. Flow updates and their basic properties (preserving the marginals and decreasing the objective) are given in Sections \ref{sec:hybrid-scheme-simple} and \ref{sec:hybrid-scheme-general}.
Combination with the domain decomposition updates and proof of convergence to the optimal coupling are given in Section \ref{sec:hybrid-scheme-convergence}.

\begin{figure}[hbt]
	\centering
	\includegraphics[width =\textwidth]{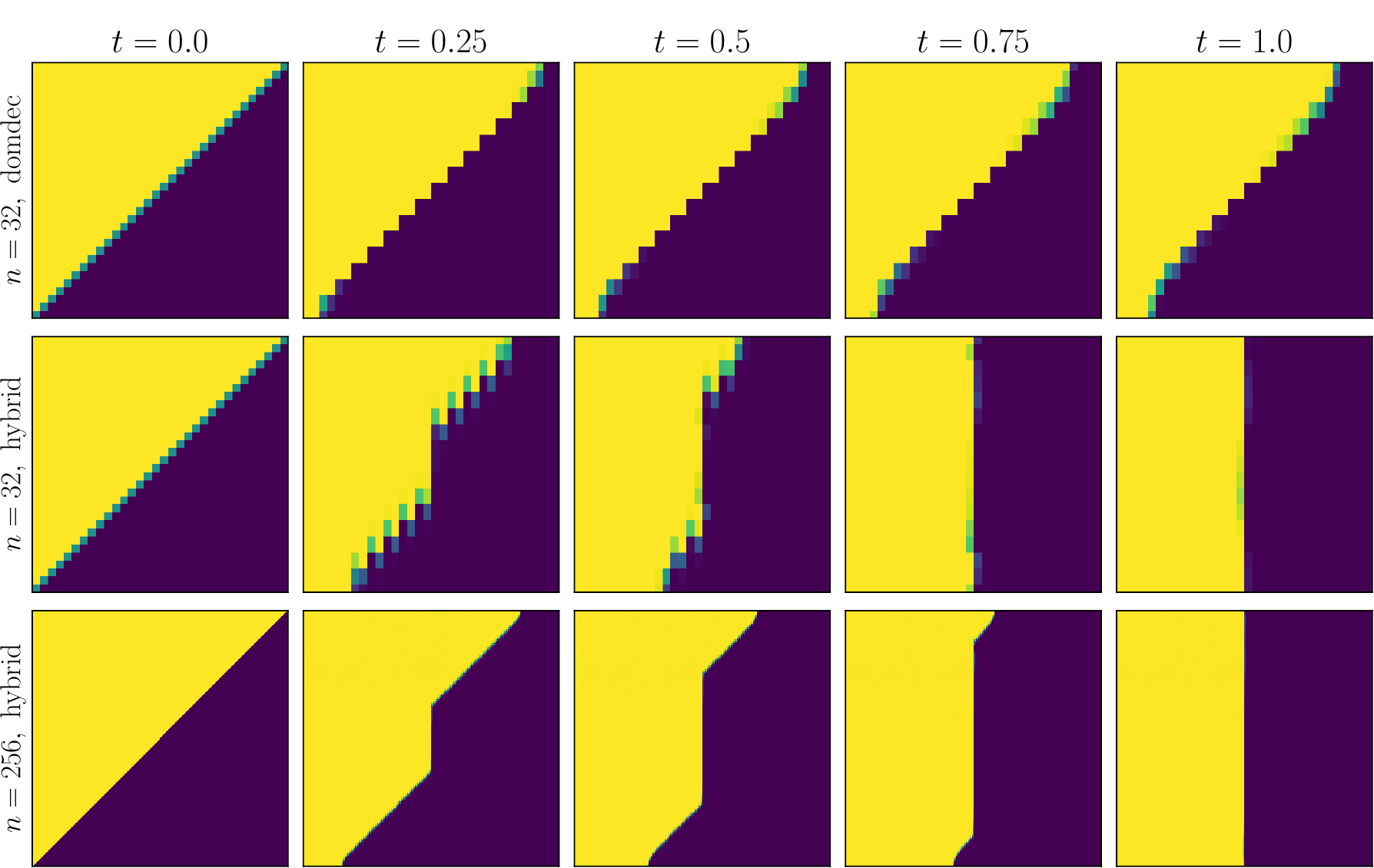}
	\caption{
		Hybrid scheme overcomes freezing:
		The top row shows the same sequence of iterates as Figure \ref{fig:freezing}, now from $t = 0$ until $t = 1$, showcasing the freezing. 
		The center row shows the corresponding trajectory for the hybrid scheme iterates: after the same number of iterations the iterates get much closer to the global optimizer, leaving only small local perturbations of the optimal assignment that can quickly be resolved by a few additional domain decomposition iterations.
		Finally, the bottom row shows a higher resolution example, converging in approximately the same time as the one with lower resolution, demonstrating the resilience to freezing.
	}
	\label{fig:domdechybridn64}
\end{figure}

\paragraph{Interpretation of flow updates as an $\LL^\infty$-version of the AHT scheme.}
In Section \ref{sec:continuity-limit} we show that flow updates can formally be interpreted as a discretized $\LL^\infty$-version of the AHT scheme, i.e.~in \eqref{eq:AHTObjectiveDerivative} one chooses the admissible $v_t$ that yields the largest decrement, subject to the constraint $|v_t(x)|_\infty \leq 1$ for all $x$ where $|\cdot|_\infty$ denotes the supremum norm on $\R^d$.
The domain decomposition iterates then deal with general non-deterministic transport plans $\pi$ and with discretization artefacts.
Our discussion remains purely formal to provide an intuitive interpretation. We anticipate that a rigorous derivation will be non-trivial and far beyond the scope of this article.

\paragraph{Numerical experiments.} The ability of the hybrid scheme to overcome the freezing behaviour is illustrated in Figure \ref{fig:domdechybridn64} and examined experimentally in more detail in Section \ref{sec:numerics}.
We demonstrate that the hybrid scheme clearly improves upon domain decomposition at a single scale.
On the other hand we observe that multiscale domain decomposition does not suffer from freezing and is in general faster than the hybrid scheme. However, the hybrid scheme could be a viable alternative in cases where a good initial coupling is known.

\paragraph{A GPU implementation.}
Our numerical experiments are based on a novel GPU implementation of domain decomposition that we describe in Appendix \ref{sec:gpu}.

Background on entropic optimal transport and domain decomposition is briefly recalled in Section \ref{sec:background}.

\section{Background}
\label{sec:background}
\subsection{Setting and notation}

\begin{itemize}
	\item Let $\R_+$ = $[0, \infty)$.
	\item Let $X$ and $Y$ be compact metric spaces. Typically, $X$ and $Y$ are convex, compact subsets of $\R^d$, but our scheme can be applied to more general spaces. We assume compactness to avoid overly technical arguments while covering the numerically relevant setting. 
	\item For a compact metric space $Z$ we denote by $\cont(Z)$ the continuous function on $Z$ and by $\meas(Z)$ the set of signed Radon measures over $Z$ (and finite by compactness of $Z$).
	We identify the latter with the topological dual of the former, and recall that integration against test functions in $\cont(Z)$ induces the weak* topology on $\meas(Z)$.
	The subset of non-negative and probability measures are $\measp(Z)$ and $\prob(Z)$. 
	The Radon norm of $\mu \in \meas(Z)$ is denoted by $||\mu||_{\meas(Z)}$, and it holds $||\mu||_{\meas(Z)} = \mu(Z)$ for $\mu \in \meas_+(Z)$. If there is no ambiguity we will simply write $||\mu||$.
	\item For $\mu, \nu \in \meas(Z)$ we write $\mu \ll \nu$ to indicate that $\mu$ is absolutely continuous with respect to $\nu$. 
	\item For $\mu$ in $\measp(Z)$ and $S \subset Z$ measurable the restriction of $\mu$ to $S$ is denoted by $\mu \restr S$.
	\item We denote by $\la \phi, \rho \ra$ the integration of a measurable function $\phi$ against a measure $\rho$. 
	\item The maps $\proj_X$ and $\proj_Y$ denote the projections of measures on $X\times Y$ to its marginals, i.e.
	\begin{align*}
		(\proj_X \pi)(S_X) \assign \pi(S_X \times Y) \qquad \tn{and} \qquad (\proj_Y \pi)(S_Y) \assign \pi(X \times S_Y)
	\end{align*}
	for $\pi \in \measp(X \times Y)$, $S_X \subset X$, $S_Y \subset Y$ measurable.
	\item For a compact metric space $Z$ and measures $\mu\in \meas(Z)$, $\nu \in \measp(Z)$, the \textit{Kullback-Leibler divergence} of $\mu$ with respect to $\nu$ is given by 
	\begin{align}
	\KL(\mu|\nu) \assign \begin{cases}
		\int_Z \varphi\left(\RadNik{\mu}{\nu}\right)\,\diff \nu & \tn{if } \mu \ll \nu,\,\mu \geq 0, \\
		+ \infty & \tn{else,}
	\end{cases}
	\quad
	\tn{with} \;\;
	\varphi(s) \assign \begin{cases}
		s\,\log(s)-s+1 & \tn{if } s>0, \\
		1 & \tn{if } s=0.
	\end{cases}
	\label{eq:KL}
	\end{align}
\item For $p \in [1,\infty)$ we denote by $|v|_p$ the $p$-norm on $\R^d$ and $|v|_\infty \assign \max_{i=1,\ldots,d} |v_i|$.
\end{itemize}

In Section \ref{sec:hybrid-scheme-convergence} we will need the following lemma. 
\begin{lemma}[Continuity of the restriction operator, Proposition 8.4.4 in \cite{bogachev-measure-theory}]
	\label{lemma:continuity-restriction}
	Let $(\rho_n)_n \subset \meas_+(Z)$ converging weak* to $\rho$. Let $A\subset Z$ closed,  
	and assume that $\rho(A) = \lim_{n\to\infty} \rho_n(A)$.
	Then $\rho\restr A = \lim_{n\rightarrow\infty}\rho_n \restr A$.
\end{lemma}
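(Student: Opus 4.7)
Since $A$ is closed and $B=\cl(Z\setminus A)$, the intersection $A\cap B$ coincides with the topological boundary $\boundary A$, and its complement in $A$ is the interior $\tn{int}(A)=Z\setminus B$. My plan is to establish weak* convergence $\rho_n\restr A \to \rho\restr A$ directly, by proving $\int_A \phi\diff\rho_n \to \int_A \phi\diff\rho$ for every $\phi\in\cont(Z)$ through a sandwich between continuous outer and inner approximations of the indicator $\mathbf{1}_A$ built from distance functions, which are available because $Z$ is a compact metric space.

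For each $\delta>0$ I will take
$\psi_\delta^+(x)\assign \max\!\bigl(0,\,1-\dist(x,A)/\delta\bigr)$ and $\psi_\delta^-(x)\assign \min\!\bigl(1,\,\dist(x,B)/\delta\bigr)$,
both in $\cont(Z)$ and satisfying $\psi_\delta^-\leq \mathbf{1}_A\leq \psi_\delta^+$ on $Z$. A direct check using closedness of $A$ and $B$ shows that, as $\delta\downarrow 0$, these converge monotonically (respectively from below and from above) to $\mathbf{1}_{\tn{int}(A)}$ and $\mathbf{1}_A$. After decomposing $\phi=\phi_+-\phi_-$ to reduce to the case $\phi\geq 0$, the sandwich
\begin{align*}
\int_Z \phi\,\psi_\delta^-\diff\rho_n \;\leq\; \int_A \phi\diff\rho_n \;\leq\; \int_Z \phi\,\psi_\delta^+\diff\rho_n
\end{align*}
holds for every $n$ and every $\delta>0$. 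Since $\phi\,\psi_\delta^\pm\in\cont(Z)$, weak* convergence $\rho_n\to\rho$ lets me pass to the limit $n\to\infty$ in the outer integrals; then taking $\liminf$ and $\limsup$ on the middle expression and letting $\delta\downarrow 0$, monotone convergence yields the lower bound $\int_{\tn{int}(A)}\phi\diff\rho$ on the $\liminf$ and the upper bound $\int_A\phi\diff\rho$ on the $\limsup$.

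The hard part, and the reason for the hypothesis on the limit measure, is closing the gap between these two bounds: the inner approximation only reaches $\mathbf{1}_{\tn{int}(A)}$, so a priori the lower limit differs from the upper limit by the $\rho$-mass of $\boundary A = A\cap B$. The assumption $\rho(A\cap B)=0$ eliminates exactly this gap and forces $\lim_n\int_A\phi\diff\rho_n = \int_A\phi\diff\rho$, which is the desired weak* convergence of the restrictions. The stronger assumption $\rho_n(A\cap B)=0$ is not strictly needed for the argument above but is natural in the intended application in Section~\ref{sec:hybrid-scheme-convergence}.
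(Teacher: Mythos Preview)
Your argument is correct and takes a genuinely different route from the paper. The paper proceeds by weak* compactness: it extracts a subsequence along which both $\rho_n\restr A$ and $\rho_n\restr B$ converge, to limits $\rho_A$ and $\rho_B$, then invokes the Portmanteau theorem (together with the hypothesis $\rho_n(A\cap B)=0$) to show $\rho_A(B)=\rho_B(A)=0$, and finally decomposes $\rho=\rho_A+\rho_B$ to identify $\rho_A=\rho\restr A$. Your proof instead establishes the convergence of $\int_A\phi\,\diff\rho_n$ directly via a sandwich by continuous test functions built from distance functions, which is essentially the construction underlying Portmanteau made explicit. Your approach is slightly more elementary (no subsequence extraction, no appeal to Portmanteau as a black box) and, as you correctly observe, it isolates that only the condition $\rho(A\cap B)=0$ on the limit measure is needed; the paper's subsequence/Portmanteau argument genuinely uses $\rho_n(A\cap B)=0$ to obtain $\rho_A(B)=0$. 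The paper's proof is a bit more structural in that it identifies the limiting restrictions as measures rather than testing against each $\phi$, but both are short and either would be acceptable.
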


\subsection{Entropic optimal transport}
For $\mu \in \measp(X)$ and $\nu \in \measp(Y)$ we define the set of \textit{transport plans} or \textit{couplings} between $\mu$ and $\nu$ as 

\begin{equation}
\Pi(\mu, \nu)
\assign 
\{\pi \in \measp(X \times Y) 
\mid 
\proj_X\pi = \mu,
\quad 
\proj_Y\pi = \nu\}.
\end{equation}

Let $c$ be a lower-semicontinuous, positive, bounded function on $X\times Y$ and $\veps \in \R_+$. 
The entropic optimal transport problem between $\mu$ and $\nu$, with cost $c$ and regularization strength $\varepsilon$
is given by:

\begin{equation}
\label{eq:entropic-transport}
\min_{\pi\in\Pi(\mu, \nu)}
\la c, \pi \ra 
+
\veps \KL(\pi | \mu \otimes \nu)
\end{equation}

We call the first term in \eqref{eq:entropic-transport} the \textit{transport objective} and the second the \textit{entropic objective}. 
For $\veps = 0$ one recovers the Kantorovich optimal transport problem; existence of solutions is covered for example in \cite[Theorem 4.1]{Villani-OptimalTransport-09}. 
Some properties of the minimizers of the entropic problem are given below:

\begin{proposition}[Optimal entropic transport couplings]\leavevmode
	\label{prop:properties-entropic-transport}
	\begin{enumerate}[label=(\roman*)]
		\item \label{item:uniqueness-entropic-ot}
		\eqref{eq:entropic-transport} has a unique minimizer $\pi^* \in \Pi(\mu, \nu)$.
		\item \label{item:potentials-unique-up-to-constant}
		There exist measurable $\alpha^*: X\rightarrow \R$, $\beta^*:Y \rightarrow \R$ such that $\pi^*  = e^{(\alpha^* \oplus \beta^* - c)/\veps} \mu\otimes \nu$.
		The \textit{entropic potentials }
		$\alpha^*$ and $\beta^*$ are unique $\mu$-a.e.~and $\nu$-a.e.~up to a constant offsets, i.e. $(\alpha^*,\beta^*) \to (\alpha^* + C, \beta^* - C)$ for $C\in \R$.
		\item \label{item:characterization-entropic-ot}
		A transport plan $\pi^\ast \in \Pi(\mu, \nu)$ of the form $\pi^\ast = e^{(\alpha^* \oplus \beta^* - c)/\veps}\mu\otimes \nu$ with $\alpha^\ast \in \LL^\infty(X,\mu)$, $\beta^\ast \in \LL^\infty(Y,\nu)$ is optimal for \eqref{eq:entropic-transport}.
	\end{enumerate}
\end{proposition}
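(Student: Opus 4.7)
The three statements are the standard existence-uniqueness-duality package for entropic OT, and I would prove them in order (i) $\to$ (iii) $\to$ (ii), because the ``sufficiency'' direction (iii) follows from a short direct computation and is needed below to identify the optimizer in (ii).

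\textbf{Part (i).} I would obtain existence by the direct method: $\Pi(\mu,\nu)$ is weak*-compact (it is a closed subset of $\{\pi \in \measp(X\times Y) : \pi(X\times Y) = \mu(X)\}$, which is weak*-compact by Banach-Alaoglu since $X\times Y$ is compact), the linear term $\pi\mapsto\la c,\pi\ra$ is weak*-continuous because $c$ is bounded and the restriction of the objective to $\Pi(\mu,\nu)$ may be taken with $c$ continuous after a mollification; and $\KL(\cdot\,|\,\mu\otimes\nu)$ is weak*-lower semicontinuous (e.g.\ via the variational formula $\KL(\pi|\mu\otimes\nu) = \sup_{\phi\in\cont(X\times Y)}[\la\phi,\pi\ra - \la e^\phi-1, \mu\otimes\nu\ra]$). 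The objective is also finite at $\pi = \mu\otimes\nu$, so a minimizer exists. Uniqueness follows because $\varphi$ in \eqref{eq:KL} is strictly convex on $(0,\infty)$, so $\KL(\cdot\,|\,\mu\otimes\nu)$ is strictly convex on the (convex) domain where it is finite, and $\Pi(\mu,\nu)$ is convex.

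\textbf{Part (iii).} Given a candidate $\pi = e^{(\alpha\oplus\beta-c)/\veps}\mu\otimes\nu \in \Pi(\mu,\nu)$ with bounded $\alpha,\beta$, I would pick any competitor $\pi'\in\Pi(\mu,\nu)$ with finite objective (hence $\pi'\ll\mu\otimes\nu$, and in fact $\pi'\ll\pi$ since $\pi$ has strictly positive density under boundedness of $\alpha,\beta,c$). A short calculation using $\log(\diff\pi'/\diff(\mu\otimes\nu)) = \log(\diff\pi'/\diff\pi) + (\alpha\oplus\beta-c)/\veps$, followed by integrating against $\pi'$ and using that $\pi'$ has marginals $\mu,\nu$, gives
\begin{equation*}
\la c,\pi'\ra + \veps\KL(\pi'|\mu\otimes\nu) - \la\alpha,\mu\ra - \la\beta,\nu\ra - \veps\pi'(X\times Y) = \veps\KL(\pi'|\pi) - \veps.
\end{equation*}
The same identity applied to $\pi$ itself gives a left-hand side with $\veps\KL(\pi|\pi)=0$ in place of $\veps\KL(\pi'|\pi)$. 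Subtracting and using $\KL(\pi'|\pi)\ge 0$ yields optimality of $\pi$.

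\textbf{Part (ii).} I would produce $(\alpha^\ast,\beta^\ast)$ via Fenchel-Rockafellar duality applied to the primal problem \eqref{eq:entropic-transport}, viewing the marginal constraints as a bounded linear constraint $A\pi = (\mu,\nu)$ on $\meas(X\times Y)$ with $A = (\proj_X,\proj_Y)$. The dual problem reads
\begin{equation*}
\sup_{\alpha\in\cont(X),\,\beta\in\cont(Y)} \la\alpha,\mu\ra + \la\beta,\nu\ra - \veps\int e^{(\alpha\oplus\beta-c)/\veps}\diff(\mu\otimes\nu) + \veps.
\end{equation*}
Strong duality and existence of a dual maximizer follow from the usual coercivity argument (the dual functional is invariant under $(\alpha,\beta)\mapsto(\alpha+C,\beta-C)$, after normalizing $\alpha$ the map is coercive, and the maximizer lies in $\LL^\infty$ because $c$ is bounded and $\mu,\nu$ have total mass bounded away from $0$). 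The primal-dual optimality relation then forces $\pi^\ast = e^{(\alpha^\ast\oplus\beta^\ast-c)/\veps}\mu\otimes\nu$, and such $(\alpha^\ast,\beta^\ast)$ is optimal for the primal by (iii). For uniqueness up to constants: if $(\alpha_1,\beta_1)$ and $(\alpha_2,\beta_2)$ both yield $\pi^\ast$, then $\alpha_1(x)+\beta_1(y)=\alpha_2(x)+\beta_2(y)$ for $(\mu\otimes\nu)$-a.e.\ $(x,y)$, so $\alpha_1(x)-\alpha_2(x) = \beta_2(y)-\beta_1(y)$ for such $(x,y)$, and a Fubini-type argument forces both sides to equal a constant $C$ almost everywhere.

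\textbf{Main obstacle.} The delicate point is (ii): constructing the dual maximizers in $\LL^\infty$ with only the assumption ``$c$ bounded and lower semicontinuous''. Existence in $\cont$ requires continuity of $c$, so the cleanest route is to work with measurable bounded $(\alpha,\beta)$ from the start, noting that any dual maximizing sequence is bounded in $\LL^\infty$ after the constant-shift normalization (using $\veps\log\int e^{-c/\veps}\diff(\mu\otimes\nu)$-type bounds and boundedness of $c$), then passing to a weak* limit and verifying the primal-dual relation via (iii). I would therefore give the existence argument in detail, whereas uniqueness and the Fubini step are essentially bookkeeping.
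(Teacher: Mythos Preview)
The paper does not give its own proof of this proposition: it simply writes ``For proofs see \cite[Propositions 2.3 and 2.5]{BoSch2020} and references therein.'' So there is no in-paper argument to compare against, only the cited standard reference.

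Your proposal is correct and follows exactly the standard route that underlies those cited results: direct method plus strict convexity of $\KL$ for (i), the ``Pythagorean'' identity $\la c,\pi'\ra + \veps\KL(\pi'|\mu\otimes\nu) = \veps\KL(\pi'|\pi) + \la\alpha,\mu\ra + \la\beta,\nu\ra$ for (iii), and Fenchel--Rockafellar duality with $\LL^\infty$ compactness for (ii). One small slip in (i): $\pi\mapsto\la c,\pi\ra$ is not weak*-continuous for $c$ merely bounded; it is weak*-lower semicontinuous when $c$ is lower semicontinuous (write $c$ as a supremum of continuous functions), and that is all the direct method needs, so the mollification remark can be dropped. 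Your identification of the main obstacle in (ii) --- getting dual existence in $\LL^\infty$ without continuity of $c$ --- is accurate, and the cure you propose (normalize the constant shift, extract $\LL^\infty$ bounds from boundedness of $c$, pass to a weak* limit, then invoke (iii)) is precisely how it is done in the cited literature.
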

For proofs see \cite[Propositions 2.3 and 2.5]{BoSch2020} and references therein.

\subsection{Domain decomposition for optimal transport}
\label{sec:domdec}

Domain decomposition for optimal transport was originally proposed in \cite{BenamouPolarDomainDecomposition1994} and studied in \cite{BoSch2020} for entropic transport.
We recall here the main definitions. From now on, $\mu, \nu$ will be two probability measures, respectively on $X$ and $Y$.

\begin{definition}[Basic and composite partitions]\hfill
\label{def:partitions}
\begin{enumerate}
	\item For some finite index set $I$, let $\{X_i\}_{i\in I}$ be partition of $X$ into closed, disjoint subsets (up to a set of $\mu$-zero measure), were $m_i \assign \mu(X_i)$ is positive for all $i\in I$. We call $\{X_i\}_{i\in I}$ the \textit{basic partition}, and $\{m_i\}_{i\in I}$ the \textit{basic cell masses}. The restriction of $\mu$ to basic cells, i.e. $\mu_i \assign \mu\restr X_i$ for $i\in I$, are called the \textit{basic cell $X$-marginals}.
	\item A \textit{composite partition} is a partition of $I$. For each $J$ in a composite partition $\partGeneric$, we define the \textit{composite cells}, and the \textit{composite cell $X$-marginals} respectively by 
	\begin{align*}
		X_J & \assign \bigcup_{i \in J} X_i, 
		&
		\mu_J & \assign \sum_{i \in J} \mu_i=\mu \restr X_J.
	\end{align*}
	We will consider two composite partitions $\partA$ and $\partB$. 
\end{enumerate}
\end{definition}
A prototypical choice for basic and composite partitions for $X=[0,1]^2$ is given by an interleaving grid of cubes as sketched in Figure \ref{fig:basicandcompositecells}.
Convergence of domain decomposition to the optimal coupling hinges on some form of connectivity property of these partitions, where the precise notion of connectivity depends on the setting (e.g.~without or with entropic regularization, see \cite{BenamouPolarDomainDecomposition1994,asymptotic_domdec_arxiv,BoSch2020}). The key requirement is that optimality of the coupling on each of the composite cells implies global optimality of the coupling.
For entropic optimal transport, connectedness of the \emph{partition graph} (see Definition \ref{def:PartitionGraph}) is sufficient and necessary \cite{BoSch2020}.

\begin{figure}[hbt]
	\centering
	\includegraphics[width=0.8\linewidth]{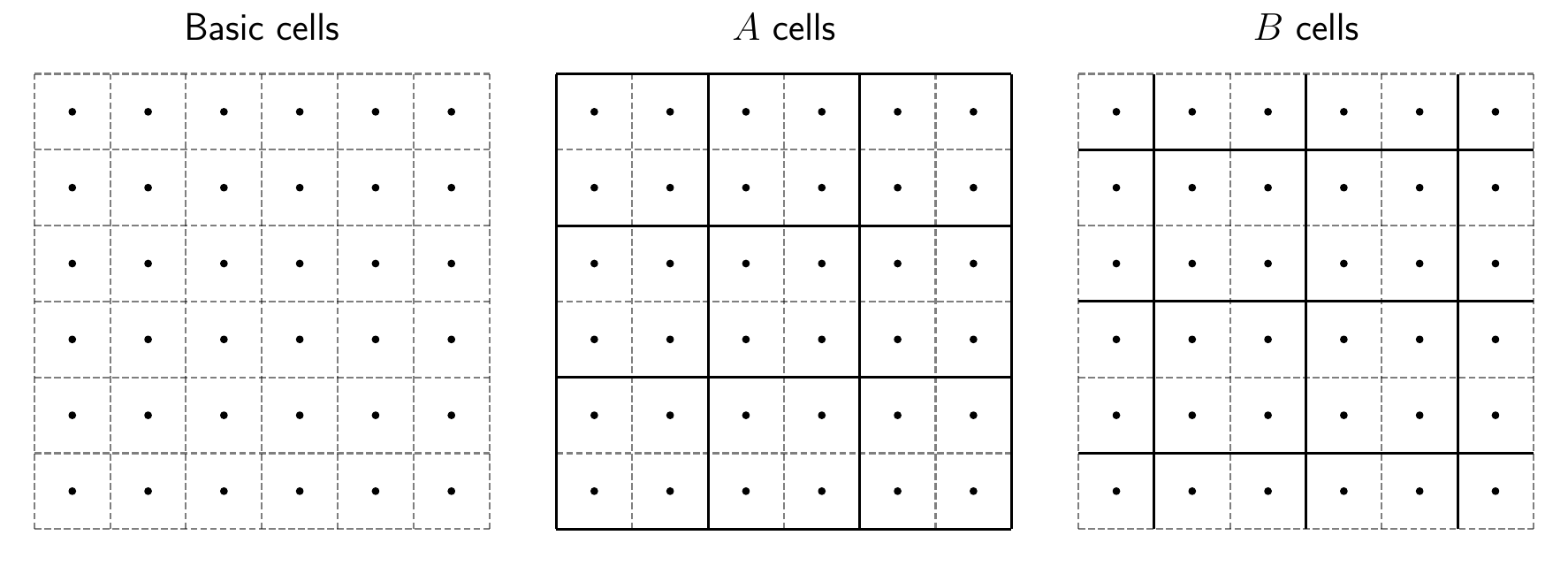}
	\caption{Prototypical choice of basic and composite partitions for $X = [0,1]^2$. For a resolution scale $n \in \N$, $X$ is first partitioned into \textit{basic cells} of size $1/n \times 1/n$. The restriction of $\mu$ to every basic cell is approximated by a Dirac delta in the center of the cell (indicated by the dots). The $A$ and $B$ partitions are constructed by $2\times 2$ groups of basic cells, with an offset between $A$ and $B$ groups.}
	\label{fig:basicandcompositecells}
\end{figure}

The domain decomposition algorithm is now stated in Algorithm \ref{alg:DomDecIter} and \ref{alg:DomDec}, where we will reuse the sub-routine of Algorithm \ref{alg:DomDecIter} in the hybrid scheme.
One may notice that the subproblem in line \ref{codeline:cell-ot-problem} of Algorithm \ref{alg:DomDecIter} is slightly different from \eqref{eq:entropic-transport}, since the reference measure in the $\KL$ term is not the product of the problem marginals.
This reference measure is inherited by restricting the global entropic optimal transport objective \eqref{eq:entropic-transport} to the cell $X_J \times Y$.
Nevertheless, noting that $0 \le \nu_J \le \nu$, a direct computation shows that the two problems are equal up to a constant contribution, and therefore Proposition \ref{prop:properties-entropic-transport} also applies to the domain decomposition cell subproblem. Keeping $\mu_J \otimes \nu$ as reference measure in line \ref{codeline:cell-ot-problem} emphasizes the interpretation of domain decomposition as a block coordinate descent method that decreases the global objective in every step.

\begin{algorithmfloat}[hbt]
	\noindent
	\textbf{Input}: current coupling $\pi \in \Pi(\mu,\nu)$ and partition $\partGeneric$.
	
	\noindent
	\textbf{Output}: new coupling $\pi'\in\Pi(\mu,\nu)$
	\smallskip
	
	\begin{algorithmic}[1]
		\ForAll{$J \in \partGeneric$}
		\Comment{iterate over each composite cell}
		\State $\mu_J \leftarrow \proj_X(\pi \restr(X_{J} \times Y))$
		\Comment{retrieve $X$-marginal on cell}
		\State $\iter{\nu_J}{}\, \leftarrow \proj_Y(\pi \restr(X_{J} \times Y))$
		\label{codeline:get-composite-marginal}
		\Comment{compute $Y$-marginal on cell}
		\State $\iter{\pi_{J}}{} \leftarrow \arg\min \left\{
		\int_{X_J \times Y} c\,\diff \pi + \veps\,\KL(\pi|\mu_J \otimes \nu) \,\middle|\, \pi \in \Pi\left(\mu_J,\iter{\nu_J}{}\right)\right\}$ 
		\label{codeline:cell-ot-problem}
		\EndFor
		\State $\pi' \leftarrow \sum_{J \in \partGeneric} \iter{\pi}{}_{J}$
	\end{algorithmic}
	\caption{\textsc{DomDecIter} \protect{\cite[Algorithm 1]{BoSch2020}}}
	\label{alg:DomDecIter}
\end{algorithmfloat}

\begin{algorithmfloat}[hbt]
	\noindent
	\textbf{Input}: initial coupling $\piInit \in \Pi(\mu,\nu)$
	
	\noindent
	\textbf{Output}: a sequence $(\iter{\pi}{k})_{k}$ of feasible couplings in $\Pi(\mu,\nu)$
	\smallskip
	
	\begin{algorithmic}[1]
		\State $\iter{\pi}{0} \leftarrow \piInit$
		\State $k \leftarrow 0$
		\Loop
		\State $k \leftarrow k+1$
		\State \algorithmicif\ ($k$ is odd)\ \algorithmicthen\ $\partk \leftarrow \partA$\ \algorithmicelse\ $\partk \leftarrow \partB$
		\Comment{select the partition}
		\State $\iter{\pi}{k} \leftarrow \text{\textsc{DomDecIter}}(\pi^{k-1}, \partk)$
		\Comment{solve all subproblems}
		\EndLoop
	\end{algorithmic}
	\caption{Domain decomposition for optimal transport \protect{\cite[Algorithm 1]{BoSch2020}}}
	\label{alg:DomDec}
\end{algorithmfloat}

Given the iterates $(\pi^k)_k$ computed in Algorithm \ref{alg:DomDec}, we call 
	\begin{equation}
		\label{eq:basic-cell-Y-marginal}
		\nu_i^k 
		\assign 
		\proj_Y( \pi^k \restr (X_i\times Y)),
		\quad
		i\in I.
	\end{equation}
	the \textit{basic cell $Y$-marginals}.
	In fact, for a performant implementation one should merely store $(\nu_i^k)_{i\in I}$ instead of the full $\pi^k$, since they require less memory and are sufficient to compute $\nu_J$ in line \ref{codeline:get-composite-marginal} of Algorithm \ref{alg:DomDecIter}, and (parts of) the full iterates can be reconstructed efficiently from the basic cell $Y$-marginals wherever required.
	For more details on the implementation of the domain decomposition algorithm we refer to \cite[Section 6]{BoSch2020}.

\section{Hybrid scheme}
\label{sec:hybrid-scheme}

As discussed in the introduction, domain decomposition performs poorly on initializations that contain a substantial nonlocal curl component. In this Section we show how to enhance the domain decomposition iteration with an additional global update step designed to reduce this curl, which we call the \textit{flow update}. 

We start in Section \ref{sec:hybrid-scheme-simple} with a simplified setting to build intuition. Section \ref{sec:hybrid-scheme-general} defines the flow update in full generality and establishes that it decreases the entropic optimal transport objective \eqref{eq:entropic-transport} while preserving the marginals. Finally, Section \ref{sec:hybrid-scheme-convergence} introduces a scheme that interleaves entropic domain decomposition with flow updates, and shows convergence of the iterates to the optimal coupling.
Numerical experiments demonstrating the efficiency of the new scheme are presented in Section \ref{sec:numerics}.

\subsection{Flow update for singleton basic cells}
\label{sec:hybrid-scheme-simple}
For now, assume that $X$ is finite and the basic partition $I$ divides $X$ into singletons, i.e.~each $X_i$ consists of a single $x_i$ for all $i \in I$.
We assume that a directed graph with vertex set $I$ and edge (or neighbourhood) set $\neigh \subset I \times I$ is given, satisfying the following definition.
\begin{definition}
	\label{def:neigh-function}
	A \emph{neighborhood set} is a set $\neigh \subset I \times I$ with the properties:
	\begin{enumerate}
		\item $(i,i) \in \neigh$ for all $i \in I$.
		\item For all $i,j \in I$, $(i,j) \in \neigh$ if and only if $(j,i) \in \neigh$.
	\end{enumerate}
	We write $\neigh(i) \assign \{ j \in I | (i,j) \in \neigh\}$.
\end{definition}
That is, the directed graph $(I,\neigh)$ is symmetric and each vertex has an edge to itself. On this graph we can introduce the set of mass preserving flows.
\begin{definition}
For a neighbourhood set $\neigh$, a tuple $w \assign (w_{ij})_{(i,j) \in \neigh}$, $w_{ij} \in \R_+$ for $(i,j) \in \neigh$, is a \emph{(mass preserving) flow} if it satisfies the divergence constraint
\begin{equation}
\label{eq:div-constraint-single-cell}
	\sum_{j \in \neigh(i)} w_{ij}
	=
	\sum_{j \in \neigh(i)} w_{ji}
	= 
	\mu(X_i) \assignRe m_i\quad \tn{for all $i\in I$.} 
\end{equation}
For $(i,j) \in \neigh$ we interpret $w_{ij}$ as the amount of mass flowing from cell $i$ to $j$, in particular $w_{ii}$ denotes the mass that remains in cell $i$.
Denote by $\flows$ the set of mass preserving flows (omitting the dependency on $I$, $\neigh$, and $\mu$ in the notation for simplicity).
\end{definition}

Consider now some coupling $\pi \in \Pi(\mu,\nu)$ and recall the basic cell $Y$-marginals $\nu_i \assign \proj_Y (\pi\restr\{x_i\}\times Y)$ for each $i\in I$, \eqref{eq:basic-cell-Y-marginal}.
Note that $\pi = \sum_{i \in I} \delta_{x_i} \otimes \nu_i$.
A mass preserving flow $w \in \flows$ induces a new coupling via
\begin{align}
	\label{eq:PiHatSimple}
	\hat{\nu}_i & \assign \sum_{j \in \neigh(i)} w_{ji} \frac{\nu_j}{m_j}
	\qquad \tn{for } i \in I, &
	\hat{\pi} & \assign \sum_{i \in I} \delta_{x_i} \otimes \hat{\nu}_i.
\end{align}
Using \eqref{eq:div-constraint-single-cell} it is easy to verify that $\hat{\pi} \in \Pi(\mu,\nu)$ (see Section \ref{sec:hybrid-scheme-general} for a proof in a more general setting).
For the change in the transport objective we find
\begin{align}
\la c, \hat{\pi}-\pi \ra & = \sum_{i \in I} \la c(x_i,\cdot), \hat{\nu}_i - \nu_i \ra
= \sum_{i \in I} \la c(x_i,\cdot), \sum_{j \in \neigh(i)} w_{ji} \frac{\nu_j}{m_j}
- \sum_{j \in \neigh(i)} w_{ij} \frac{\nu_i}{m_i} \ra \nonumber \\
& = \sum_{(i,j) \in \neigh} w_{ij} \la c(x_j,\cdot)-c(x_i,\cdot), \frac{\nu_i}{m_i} \ra.
\end{align}
Minimizing this over all flows $w \in \flows$ (i.e.~finding the best new coupling $\hat{\pi}$ that can be reached by a flow from $\pi$) corresponds to solving the min-cost flow problem
\begin{equation}
\label{eq:MinCostFlow}
\min_{w \in \flows} \sum_{(i,j) \in \neigh} w_{ij} \cdot \cbar_{ij}
\end{equation}
for cost coefficients
\begin{equation}
\label{eq:CostCoefficientSimple}
\cbar_{ij}	\assign \la c(x_j,\cdot)-c(x_i,\cdot), \frac{\nu_i}{m_i} \ra
\qquad \tn{for } (i,j) \in \neigh.
\end{equation}
Note that \eqref{eq:MinCostFlow} can be interpreted as an optimal transport problem on the graph $(I, \neigh)$.
We call the procedure of replacing $\pi$ by $\hat{\pi}$ as given by \eqref{eq:PiHatSimple} with a minimizing flow $w$ of \eqref{eq:MinCostFlow}, a \emph{flow update}.
Key properties of the flow update are:
\begin{itemize}
\item \textbf{The problem \eqref{eq:MinCostFlow} is global}, i.e.~it will be able to detect and remove curl that may be barely detectable at the level of individual composite cells.
	\item \textbf{The optimal flow $w$ can be computed efficiently} for moderate sizes of $I$ and sparse $\neigh$, since there are highly efficient solvers for the min-cost flow problem.
	\item \textbf{The flow update does not modify the marginals.} This is a consequence of the divergence constraints \eqref{eq:div-constraint-single-cell}.
	\item \textbf{The flow update does not increase the transport objective nor the entropy objective.}
	For the former this is rather easy to see, since $\cbar_{ii}=0$ and an admissible flow is given by keeping all mass in its original location, i.e.~$w_{ij}=0$ for $i \neq j$.
	The latter is not as obvious, but it hinges on convexity of the entropy and the fact that the $\hat{\nu}_i$ are (up to normalization) convex combinations of the $\nu_i$. Proofs for both are given in Section \ref{sec:hybrid-scheme-general}.
	\item \textbf{The flow update can formally be interpreted as a discretization of an $\LL^\infty$-version of the AHT scheme.} To anticipate this, note that in the limit of very fine partitions, $c(x_j,\cdot)-c(x_i,\cdot)$ formally becomes the partial derivative of $c$ in $X$ at $x_i$ in direction $x_j-x_i$ (here $X \subset \R^d$), $\nu_i/m_i$ becomes the disintegration of $\pi$ with respect to its $X$-marginal at $x_i$, so that $\cbar_{ij}$ becomes the partial derivative at $x_i$, averaged over $Y$ with respect to the disintegration. Finally, $w_{ij}$ formally becomes a momentum measure, or $w_{ij}/m_i$ becomes a velocity field, that describes the horizontal movement of mass and the divergence constraints \eqref{eq:div-constraint-single-cell} enforce that the velocity field has its components bounded by 1 almost everywhere. We revisit this intuition in Section \ref{sec:continuity-limit}.
\end{itemize}
On the other hand, \textbf{for large $X$ the above strategy will not be computationally practical,} since solving problem \eqref{eq:MinCostFlow} will become inefficient.
As a compromise, in Section \ref{sec:hybrid-scheme-general}, we will adapt the flow update to the setting where the basic cells $X_i$ are not singletons. This will require a more complex interpretation of flows $w$ as induced plans $\hat{\pi}$, \eqref{eq:PiHatSimple}, and a corresponding adjusted definition of the cost coefficients $\cbar_{ij}$, \eqref{eq:CostCoefficientSimple}.
Intuitively, the flow update will then implement a global optimization of $\pi$ at the resolution level of the basic cells $(X_i)_{i \in I}$ (which can be kept at a moderate size), whereas domain decomposition will locally optimize on the scale of individual cells.

\subsection{Flow update for general basic cells}
\label{sec:hybrid-scheme-general}
If basic cells $X_i$ are no longer singletons, construction \eqref{eq:PiHatSimple} is no longer applicable to translate a flow into an update of $\pi$.
Instead, we will need a more general instruction on how to arrange mass from cell $i$ in cell $j$, while preserving the marginals of the coupling. This role is played by \emph{edge candidates}.

\begin{definition}[Edge candidates]
	\label{def:edge-candidates}
	A candidate for edge $(i,j) \in \neigh$ is a coupling $\pihat_{ij} \in \Pi(\mu_j/m_j,\nu_i/m_i)$.
\end{definition}

We will specify in Definition \ref{def:glued-plan} and below how edge candidates can be constructed in a practical way. Once a set of such candidates is fixed, we can define the general flow update.

\begin{definition}[Flow update]
	\label{def:flow-update}	
	Let $\pi \in \measp(X \times Y)$, $(\pi_i)_{i\in I}$ its decomposition into basic cells, with cell masses $(m_i)_{i\in I}$, cell $Y$-marginals $(\nu_i)_{i\in I}$ and cell $X$-marginals $(\mu_i)_{i\in I}$. Let $\neigh$ be a neighborhood set and $(\pihat_{ij})_{(i,j)\in\neigh}$ be edge candidates, with $\pihat_{ii} = \pi_i/m_i$ for each $i\in I$. Define the edge cost coefficients
	\begin{equation}
	\label{eq:edge-cost}
	\cbar_{ij} \assign \la c, \pihat_{ij}-\pihat_{ii} \ra.
	\end{equation}
	A \textit{flow update} is a coupling of the form 
	\begin{equation}
		\label{eq:flow-update-new-plan}
		\pihat \assign \sum_{j\in I} \pihat_j
		,
		\quad
		\tn{with }
		\pihat_j 
		\assign 
		\sum_{i\in \neigh(j)}
		w_{ij} \cdot \pihat_{ij}
	\end{equation}
	where $(w_{ij})_{ij}$ is a solution to the min-cost flow problem \eqref{eq:MinCostFlow} with cost coefficients	 $(\cbar_{ij})_{(i,j) \in \neigh}$.
\end{definition}

\begin{remark}
If the basic cells are singletons, i.e.~$X_i = \{x_i\}$ for $i\in I$, as above, then the only feasible candidate for each edge is $\pihat_{ij} \assign \delta_{x_j} \otimes \nu_i/m_i$.
In this case \eqref{eq:edge-cost} becomes \eqref{eq:CostCoefficientSimple} and \eqref{eq:flow-update-new-plan} becomes \eqref{eq:PiHatSimple}.
\end{remark}
	
\begin{remark}
For a given $\pi$, the flow update might not be unique, since solutions to the min-cost flow problem \eqref{eq:MinCostFlow} might not be unique. This is however no issue for the algorithm.
\end{remark}

The following two lemmas show that a flow update does not change the marginals nor does it increase the transport score. 

\begin{lemma}
	\label{lemma:flow-preserves-marginals}
	A coupling $\pi\in \measp(X \times Y)$ preserves its global marginals under a flow update. The new cell $Y$-marginals are given by
	\begin{equation}
	\label{eq:flow-update-cell-marginals}
	\nuhat_j
	\assign 
	\sum_{i\in \neigh(j)} w_{ij} \cdot \frac{\nu_i}{m_i}.
	\end{equation}
\end{lemma}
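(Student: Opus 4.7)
The plan is to use the decomposition \eqref{eq:flow-update-new-plan} together with the defining property of the edge candidates (each $\pihat_{ij}$ has $X$-marginal $\mu_j/m_j$ and $Y$-marginal $\nu_i/m_i$) and the two divergence constraints \eqref{eq:div-constraint-single-cell} on the flow $w$. The argument is entirely bookkeeping: compute the $X$- and $Y$-projections of each summand $\pihat_j$ and then sum over $j \in I$.

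First I would verify the formula for the new cell $Y$-marginal. Applying $\proj_Y$ termwise to $\pihat_j = \sum_{i \in \neigh(j)} w_{ij}\, \pihat_{ij}$ and using $\proj_Y \pihat_{ij} = \nu_i/m_i$ gives precisely \eqref{eq:flow-update-cell-marginals}. Next I would compute the total $Y$-marginal by summing over $j$, exchanging the order of summation, and invoking the incoming--outgoing identity
\[
\sum_{j \in \neigh(i)} w_{ij} = m_i,
\]
so that
\[
\proj_Y \pihat = \sum_{j \in I} \sum_{i \in \neigh(j)} w_{ij} \frac{\nu_i}{m_i} = \sum_{i \in I} \frac{\nu_i}{m_i} \sum_{j \in \neigh(i)} w_{ij} = \sum_{i \in I} \nu_i = \nu.
\]

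For the $X$-marginal the argument is symmetric but slightly cleaner: applying $\proj_X$ to $\pihat_j$ and using $\proj_X \pihat_{ij} = \mu_j/m_j$ yields
\[
\proj_X \pihat_j = \frac{\mu_j}{m_j} \sum_{i \in \neigh(j)} w_{ij} = \mu_j,
\]
where the divergence constraint at $j$ (the total incoming flow equals $m_j$) gives the last equality. Summing over $j$ produces $\proj_X \pihat = \sum_j \mu_j = \mu$.

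There is no genuine obstacle here; the only thing worth being careful about is matching the two divergence identities in \eqref{eq:div-constraint-single-cell} to the correct projection (incoming flow for $\proj_X$, outgoing flow for $\proj_Y$), and checking that the interchange of the two finite sums is valid — which it is, since $I$ and $\neigh$ are finite and all measures involved are non-negative.
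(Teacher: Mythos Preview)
Your proposal is correct and follows exactly the same approach as the paper: project each $\pihat_j$ using $\proj_X \pihat_{ij}=\mu_j/m_j$ and $\proj_Y \pihat_{ij}=\nu_i/m_i$, then apply the two divergence constraints in \eqref{eq:div-constraint-single-cell} and sum over cells. The only cosmetic difference is that the paper phrases the $X$-marginal conclusion as $\proj_X \pihat_j = \proj_X \pi_j$ rather than $\mu_j$, which is the same thing.
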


\begin{proof}
	Let $\pi$ be the initial plan, and let $\pihat$ be a flow update as given in \eqref{eq:flow-update-new-plan}. Then, on each basic cell $j \in I$,
	\begin{equation*}
		\proj_X \pihat_j
		=
		\sum_{i\in \neigh(j)}
		w_{ij} \cdot \proj_X \pihat_{ij}
		=
		\sum_{i\in \neigh(j)}
		\frac{w_{ij}}{m_j} \cdot \mu_j
		=
		\proj_X \pi_j.
	\end{equation*}
	The new cell $Y$-marginals is given by
	\begin{equation*}
		\nuhat_j 
		=
		\proj_Y \pihat_j
		=
		\sum_{i\in \neigh(j)}
		w_{ij} \cdot \proj_Y \pihat_{ij}
		=
		\sum_{i\in \neigh(j)}
		w_{ij} \cdot \frac{\nu_i}{m_i},
	\end{equation*}
	and so the global $Y$-marginal is
	\begin{equation*}
		\proj_Y \pihat
		=
		\sum_{j\in I}
		\nuhat_j
		=
		\sum_{j\in I}
		\sum_{i\in \neigh(j)}
		\frac{w_{ij}}{m_i}\nu_i
		=
		\sum_{i\in I}
		\nu_i
		\sum_{j\in \neigh(i)}
		\frac{w_{ij}}{m_i}
		=
		\sum_{i\in I}
		\nu_i
		=
		\proj_Y \pi. \qedhere
	\end{equation*}
\end{proof}

\begin{lemma}
	\label{lemma:transport-non-increasing}
	The transport objective is non-increasing under a flow update. 
\end{lemma}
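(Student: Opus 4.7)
The plan is to expand the transport cost of the updated coupling in terms of the edge cost coefficients $\bar{c}_{ij}$ and exploit the optimality of the flow $w$ against the trivial ``do-nothing'' flow.

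First, using \eqref{eq:flow-update-new-plan} and linearity of pairing with $c$, I would write
\begin{equation*}
\la c, \pihat \ra = \sum_{(i,j) \in \neigh} w_{ij} \la c, \pihat_{ij} \ra.
\end{equation*}
Next, by the definition \eqref{eq:edge-cost} of the edge cost coefficients, $\la c,\pihat_{ij}\ra = \bar{c}_{ij} + \la c,\pihat_{ii}\ra$. Substituting this and regrouping the second term by the outgoing-flow divergence constraint $\sum_{j \in \neigh(i)} w_{ij} = m_i$ yields
\begin{equation*}
\la c, \pihat \ra = \sum_{(i,j) \in \neigh} w_{ij}\, \bar{c}_{ij} + \sum_{i \in I} m_i \la c, \pihat_{ii} \ra = \sum_{(i,j) \in \neigh} w_{ij}\, \bar{c}_{ij} + \la c, \pi \ra,
\end{equation*}
where in the last step I use $\pihat_{ii} = \pi_i/m_i$ and $\pi = \sum_i \pi_i$.

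It therefore suffices to show that $\sum_{(i,j) \in \neigh} w_{ij}\, \bar{c}_{ij} \leq 0$. For this I would compare $w$ against the trivial flow $w^0$ defined by $w^0_{ii} = m_i$ and $w^0_{ij} = 0$ for $i \neq j$, which is clearly feasible for \eqref{eq:div-constraint-single-cell} and gives cost $\sum_i m_i\, \bar{c}_{ii} = 0$ since $\bar{c}_{ii} = \la c, \pihat_{ii}-\pihat_{ii}\ra = 0$. Optimality of $w$ in \eqref{eq:MinCostFlow} yields $\sum_{(i,j)} w_{ij}\,\bar{c}_{ij} \leq 0$, concluding the proof. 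There is no real obstacle here; the argument is essentially bookkeeping plus the observation that the identity flow realizes the zero-cost baseline.
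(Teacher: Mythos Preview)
Your proof is correct and follows essentially the same route as the paper: both compute $\la c,\pihat-\pi\ra=\sum_{(i,j)\in\neigh} w_{ij}\,\cbar_{ij}$ via the divergence constraint and the identity $\pihat_{ii}=\pi_i/m_i$, and then compare the optimal flow against the stationary flow $w^0_{ii}=m_i$, $w^0_{ij}=0$ for $i\neq j$ to conclude non-positivity. The only cosmetic difference is that the paper writes $\pi=\sum_{(i,j)\in\neigh} w_{ij}\,\pihat_{ii}$ directly, whereas you unfold the same identity step by step.
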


\begin{proof}
	Let $\pi$ be an initial plan, and $\pihat$ a flow update of $\pi$ where $w$ is the corresponding optimal flow of \eqref{eq:MinCostFlow}.
	By Definition \ref{def:flow-update} one has that
	$$\pihat=\sum_{(i,j) \in \neigh} w_{ij} \cdot \pihat_{ij} \qquad \tn{and} \qquad \pi=\sum_{(i,j) \in \neigh} w_{ij} \cdot \pihat_{ii}$$
	and therefore one obtains for the change in the transport objective
	\begin{align}
		\la c, \pihat-\pi \ra
		&=
		\sum_{(i,j) \in \neigh}
		w_{ij} \cdot \la c, \pihat_{ij}-\pihat_{ii} \ra =
		\sum_{(i,j) \in \neigh}
		w_{ij} \cdot \cbar_{ij}
		\label{eq:total-cost-update}
	\end{align}
	which is precisely the objective of \eqref{eq:MinCostFlow}.
	A feasible candidate for $w$ is the `stationary flow' that leaves all mass in its original cells, i.e.~$w_{ij}=m_i$ if $i=j$ and $w_{ij}=0$ otherwise. For this admissible choice one has that \eqref{eq:total-cost-update} equals zero, and thus the minimal value of \eqref{eq:MinCostFlow} must be non-positive, and therefore, so must be the change in the transport objective.
\end{proof}

The entropic score is not necessarily decreasing under a flow update for general edge candidates $\pihat_{ij}$. The following definition gives a practical way to craft edge candidates for which the entropic score is also non-increasing.

\begin{definition}[Gluing edge candidate]
	\label{def:glued-plan}
	Let $i\in I$, $j\in \neigh(i)\setminus \{i\}$ and $\gamma_{ij} \in \Pi(\mu_i/m_i, \mu_j/m_j)$. We refer to the plan $\pihat_{ij} \in \Pi(\mu_j/m_j,\nu_i/m_i)$ that is obtained by combining $\gamma_{ij}$ with $\pi_i/m_i \in \Pi(\mu_i/m_i,\nu_i/m_i)$ with the gluing lemma \cite[Lemma 7.6]{Villani-TOT2003} along their common $\mu_i/m_i$-marginal as \textit{gluing edge candidate} for edge $(i,j)$. $\pihat_{ij}$ acts on measurable sets $A \times B \subset X_j\times Y$ as follows:
	\begin{equation}
		\label{eq:glued-plan}
		\pihat_{ij}(A \times B)
		\assign 
		\frac{1}{m_i} \int_{X_i} 
		(\gamma_{ij})_{x}(A) \cdot
		(\pi_i)_{x}(B)\,
		\diff \mu_i(x)
	\end{equation}
	where $(\gamma_{ij})_{x}$ and $(\pi_i)_{x}$ denote the disintegrations of both measures against their $\mu_i/m_i$ or $\mu_i$ marginal at $x$ respectively. (Disintegrations are by convention probability measures, so the normalization of $\pi_i/m_i$ can be neglected at this point.)
\end{definition}
	
\begin{remark}
	\label{remark:eps-gamma}
	Two prototypical choices for $\gamma_{ij}$ are the product coupling $\mu_i/m_i \otimes \mu_j/m_j$, and an optimal transport plan (e.g.~for the squared distance on $X$).
	
	For the former one has $(\gamma_{ij})_{x}=\mu_j/m_j$ $\mu_i$-almost everywhere and thus \eqref{eq:glued-plan} simplifies to $\pihat_{ij} = \mu_j/m_j \otimes \nu_i / m_i$. This is straightforward to implement.
	However, the induced coupling candidates $\hat{\pi}$ in \eqref{eq:flow-update-new-plan} may be somewhat `bulky' and thus the associated cost coefficients $\cbar_{ij}$ might not be favourable for moving mass, slowing down the optimization scheme.
	
	The latter choice requires to pre-compute and store the plans $\gamma_{ij}$. This is feasible since basic cells are usually small and the set $\neigh$ is sparse. \eqref{eq:glued-plan} can then be implemented by a simple matrix multiplication.
	
	In numerical experiments (Section \ref{sec:numerics}) we find that both options perform well as long as $\pi$ is far from the optimum. Close to the optimum, the latter choice performs slightly better, at the cost of some additional computational effort. See Section \ref{sec:single-scale-experiments} for more details.

	One may also consider the optimal entropic transport plan between $\mu_i/m_i$ and $\mu_j/m_j$. The above choices then correspond to the limits of $\veps=\infty$ and $\veps=0$, respectively.
\end{remark}

\begin{lemma}[Properties of the gluing edge candidates] 
	\label{lemma:properties-gluing-candidates}
	For every $i\in I$, $j\in \neigh(i)\setminus \{i\}$ let $\gamma_{ij}$ be a transport plan between $\mu_i/m_i$ and $\mu_j/m_j$, and let $\pihat_{ij}$ be a gluing edge candidate following \eqref{eq:glued-plan}. Then:
	\begin{enumerate}
		\item $\pihat_{ij}$ is a candidate for edge $(i,j)$, i.e.~$\pihat_{ij} \in \Pi(\mu_j/m_j,\nu_i/m_i)$.
		\item If $\pi_i \ll \mu_i \otimes \nu$, then $\pihat_{ij} \ll \mu_j\otimes\nu$, and its density is given by
		\begin{equation}
			\label{eq:density-glued-candidate}
			\RadNikD{\pihat_{ij}}{(\mu_j\otimes \nu)}(x,y)
			=
			\frac{1}{m_j}
			\int_{X_i}
			\RadNikD{\pi_i}{(\mu_i\otimes \nu)}(x',y)
			\diff (\gamma_{ij})_x(x')
		\end{equation}
		$\mu_j\otimes\nu$-almost everywhere, where now $(\gamma_{ij})_x$ denotes the disintegration of $\gamma_{ij}$ at $x$ against the $\mu_j/m_j$-marginal.
		\item A flow update with edge candidates $(\pihat_{ij})_{ij}$ does not increase the entropic objective.
	\end{enumerate}
\end{lemma}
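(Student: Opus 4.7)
For (i), I will simply check both marginals of $\pihat_{ij}$ as defined in \eqref{eq:glued-plan}. Setting $B=Y$ and using that the disintegrations $(\pi_i)_x$ are probability measures on $Y$, the $X$-marginal of $\pihat_{ij}$ on $A \subset X_j$ reduces to $\frac{1}{m_i} \int_{X_i} (\gamma_{ij})_x(A)\,\diff\mu_i(x)$, which equals $(\mu_j/m_j)(A)$ by the defining property of the disintegration of $\gamma_{ij}$ against its $\mu_i/m_i$ marginal. Symmetrically, taking $A=X_j$ and using that $(\gamma_{ij})_x(X_j)=1$ reduces the $Y$-marginal on $B \subset Y$ to $\frac{1}{m_i}\nu_i(B)$.

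For (ii), the natural strategy is to test \eqref{eq:glued-plan} against a continuous function $\phi$ on $X_j \times Y$, substitute $(\pi_i)_x(\diff y) = \RadNikD{\pi_i}{(\mu_i \otimes \nu)}(x,y)\,\diff\nu(y)$, recognize the joint measure $\frac{1}{m_i}(\gamma_{ij})_x(\diff x')\,\diff\mu_i(x)$ as $\gamma_{ij}$ itself, and then re-disintegrate $\gamma_{ij}$ against its $\mu_j/m_j$-marginal. Applying Fubini to swap the order of $x$- and $x'$-integration yields precisely the density formula \eqref{eq:density-glued-candidate}; absolute continuity is then immediate.

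The main work is (iii), which I expect to be the key obstacle. The idea is to decompose $\KL(\pihat|\mu\otimes\nu) = \sum_j \KL(\pihat_j|\mu_j\otimes\nu)$ (valid because the $X_j$ partition $X$), and then bound each term via Jensen's inequality applied to the convex function $\varphi$ of \eqref{eq:KL}. Using (ii), the density of $\pihat_j = \sum_{i \in \neigh(j)} w_{ij}\,\pihat_{ij}$ at $(x',y)$ with respect to $\mu_j\otimes\nu$ can be rewritten as
\begin{equation*}
\sum_{i \in \neigh(j)} \int_{X_i}\! \RadNikD{\pi_i}{(\mu_i\otimes\nu)}(x,y)\,\diff\!\bigl(\tfrac{w_{ij}}{m_j}(\gamma_{ij})_{x'}^{(2)}\bigr)(x),
\end{equation*}
where $(\gamma_{ij})_{x'}^{(2)}$ denotes the disintegration of $\gamma_{ij}$ against its $\mu_j/m_j$-marginal (and for $i=j$ we use the diagonal convention consistent with $\pihat_{jj}=\pi_j/m_j$). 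The divergence constraint \eqref{eq:div-constraint-single-cell} at vertex $j$ gives $\sum_{i\in\neigh(j)}w_{ij}/m_j=1$, so this is an integral against a probability measure in $x$. Jensen's inequality then pushes $\varphi$ inside the sum and the $(\gamma_{ij})_{x'}^{(2)}$-integral.

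It then remains to unwind the integrals. For each fixed $i,j$, integrating $\varphi\!\bigl(\RadNikD{\pi_i}{(\mu_i\otimes\nu)}(x,y)\bigr)$ against $(\gamma_{ij})_{x'}^{(2)}(\diff x)\,\mu_j(\diff x')$ reassembles $\gamma_{ij}$; since its first marginal is $\mu_i/m_i$, the inner $x'$-integral collapses and one obtains $\frac{m_j}{m_i}\KL(\pi_i|\mu_i\otimes\nu)$. Summing over $i\in\neigh(j)$ and then over $j$, and using the other divergence constraint $\sum_{j\in\neigh(i)}w_{ij}=m_i$, the weights telescope cleanly to yield $\sum_i \KL(\pi_i|\mu_i\otimes\nu)=\KL(\pi|\mu\otimes\nu)$. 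The delicate points will be handling the self-loop case $i=j$ uniformly with the gluing construction, and keeping track of which disintegration of $\gamma_{ij}$ is meant at each step.
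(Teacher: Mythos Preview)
Your proposal is correct and follows essentially the same route as the paper. Items (i) and (ii) match the paper's proof almost verbatim. For (iii), the paper proceeds in two Jensen steps: first it proves the edgewise bound $\KL(m_j\,\pihat_{ij}\,|\,\mu_j\otimes\nu)\le \tfrac{m_j}{m_i}\KL(\pi_i\,|\,\mu_i\otimes\nu)$ via Jensen against the probability measure $(\gamma_{ij})_x$, and then applies Jensen again to the convex combination $\pihat_j=\sum_i w_{ij}\,\pihat_{ij}$ (using $\sum_i w_{ij}/m_j=1$). You instead fuse these into a single Jensen application against the combined probability measure $\sum_i \tfrac{w_{ij}}{m_j}(\gamma_{ij})^{(2)}_{x'}$, which is a minor reorganisation rather than a different idea (and works because $\RadNikD{\pi_i}{(\mu_i\otimes\nu)}$ is the restriction of the global density $\RadNikD{\pi}{(\mu\otimes\nu)}$ to $X_i\times Y$). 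The paper also dispatches the singular case ($\pi_i\not\ll\mu_i\otimes\nu$ for some $i$) in one sentence, which you may want to add; and your concern about the self-loop $i=j$ is harmless since $\pihat_{jj}=\pi_j/m_j$ fits the same density formula with $(\gamma_{jj})_{x'}=\delta_{x'}$, giving equality in the Jensen step.
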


\begin{proof}
	Throughout this proof, variables $x$, $x'$ and $y$ run respectively over $X_j$ $X_i$, and $Y$.
	
	\textbf{Item 1:} This is a direct consequence of the gluing lemma and can also quickly be verified explicitly from \eqref{eq:glued-plan}, e.g.~for the $X$-marginal for measurable $A \subset X_j$ via
	\begin{align*}
		(\proj_X \pihat_{ij})(A)
		&=
		\pihat_{ij}(A \times Y)
		=	
		\frac{1}{m_i} \int_{X_i} 
		(\gamma_{ij})_{x}(A) \cdot
		(\pi_i)_{x}(Y)\,
		\diff \mu_i(x)
		=	
		\frac{1}{m_i} \int_{X_i} 
		(\gamma_{ij})_{x}(A)
		\diff \mu_i(x) \\
		& =\gamma_{ij}(A \times X_i) = \mu_j(A)/m_j.
	\end{align*}
	
	\textbf{Item 2:} Since $\RadNik{\pi_i}{\mu_i \otimes \nu}$ is non-negative, the quantity in \eqref{eq:density-glued-candidate} is unambiguously defined and we just need to check that it yields the same measure as \eqref{eq:glued-plan}. Let us integrate the right-hand side of \eqref{eq:density-glued-candidate} with respect to $\mu_j \otimes \nu$ on a set of the form $A \times B \subset X_j\times Y$:
	(Note that in the following computation we switch from the disintegration of $\gamma_{ij}$ against its $\mu_j/m_j$ marginal to the one against its $\mu_i/m_i$ marginal, denoted by $(\gamma_{ij})_x$ and $(\gamma_{ij})_{x'}$ respectively.)
	\begin{align*}
		\int_{A\times B} 
		&\left[
		\frac{1}{m_j}
		\int_{X_i}
		\RadNikD{\pi_i}{(\mu_i\otimes \nu)}(x',y)
		\diff (\gamma_{ij})_x(x')
		\right]
		\diff \mu_j(x) \diff \nu(y)
		=	
		\int_{X_i \times A \times B}
		\RadNikD{\pi_i}{(\mu_i\otimes \nu)}(x',y)
		\diff \gamma_{ij}(x',x) \diff \nu(y)
		\\
		&=
		\frac{1}{m_i}
		\int_{A \times X_i \times B}
		\RadNikD{\pi_i}{(\mu_i\otimes \nu)}(x',y)
		\diff (\gamma_{ij})_{x'}(x)
		\diff \mu_i(x') \diff \nu(y)
		=
		\frac{1}{m_i}
		\int_{A \times X_i \times B}
		\diff (\gamma_{ij})_{x'}(x)
		\diff\pi_i(x',y)
		\\
		&
		=
		\frac{1}{m_i}
		\int_{A \times X_i \times B}
		\diff(\gamma_{ij})_{x'}(x) 
		\diff(\pi_i)_{x'}(y) 
		\diff \mu_i(x')
		=
		\pihat_{ij}(A \times B).
	\end{align*}

	\textbf{Item 3:}
	If $\pi_i$ is singular with respect to $\mu_i\otimes \nu$ for some $i\in I$, the entropic score cannot increase.
	Otherwise, by the previous point $\pihat_{ij}$ has a density with respect to $\mu_i\otimes \nu$ given by \eqref{eq:density-glued-candidate}. Let us first compute how the relative entropy $\KL(m_j\,\pihat_{ij}|\mu_j\otimes \nu)$ relates to $\KL(\pi_i|\mu_i\otimes \nu)$:
	\begin{align}
		\KL(m_j\,\pihat_{ij}|\mu_j\otimes \nu)
		&=
		\int_{X_j \times Y}
		\varphi \left(m_j  
		\RadNikD{\pihat_{ij}}{(\mu_j\otimes \nu)}(x,y)
		\right)
		\diff \mu_j(x) \diff \nu(y)
		\nonumber
		\\
		&=
		\int_{X_j \times Y}
		\varphi \left(  
		\int_{X_i}
		\RadNikD{\pi_i}{(\mu_i\otimes \nu)}(x',y)
		\diff (\gamma_{ij})_x(x')
		\right)
		\diff \mu_j(x) \diff \nu(y).
		\nonumber
		\\
		\intertext{Since $(\gamma_{ij})_x$ is a probability measure and $\varphi$ is a convex function, we can use Jensen's inequality to bound this quantity:}
		&\le 
		\int_{X_j \times X_i \times Y}
		\varphi \left(  
		\RadNikD{\pi_i}{(\mu_i\otimes \nu)}(x',y)
		\right)
		\diff (\gamma_{ij})_x(x')
		\diff \mu_j(x) \diff \nu(y)
		\nonumber
		\\
		&=
		m_j\int_{X_j \times X_i \times Y}
		\varphi \left(  
		\RadNikD{\pi_i}{(\mu_i\otimes \nu)}(x',y)
		\right)
		\diff \gamma_{ij}(x',x)
		\diff \nu(y)
		\nonumber
		\\
		&=
		\frac{m_j}{m_i}
		\int_{X_i \times Y}
		\varphi \left(  
		\RadNikD{\pi_i}{(\mu_i\otimes \nu)}(x',y)
		\right)
		\diff \mu_i(x')
		\diff \nu(y)
		=
		\frac{m_j}{m_i}
		\KL(\pi_i|\mu_i\otimes \nu).
		\label{eq:estimate-KL-pihat}
	\end{align}
	 With this estimate, let us check that the global entropy does not increase after a flow update:
	\begin{align*}
	\label{eq:first-line-KL}
	\KL(\pihat | \mu \otimes \nu)
	&=
	\sum_{j\in I}
	\KL(\pihat_j | \mu_j \otimes \nu)
	=
	\sum_{j\in I}
	\KL
	( 
	\sum_{i\in \neigh(j)}
	w_{ij}\,\pihat_{ij}
	| 
	\mu_j \otimes \nu
	)
	\le 
	\sum_{(i,j) \in \neigh}
	\frac{w_{ij}}{m_j}
	\KL(m_j\,\pihat_{ij}| \mu_j \otimes \nu),
	\\
	\intertext{where in the last inequality we used Jensen's inequality in the first argument of the $\KL$. Now we use the estimate from \eqref{eq:estimate-KL-pihat}:}
	&\le 
	\sum_{(i,j)\in \neigh}
	\frac{w_{ij}}{m_j}
	\frac{m_j}{m_i}
	\KL(\pi_i|\mu_i\otimes \nu)
	=
	\sum_{i\in I}
	\left(
	\sum_{j\in \neigh(i)}
	\frac{w_{ij}}{m_i}\right)
	\KL(\pi_i|\mu_i\otimes \nu)
	=
	\KL(\pi | \mu \otimes \nu). \qedhere
	\end{align*}
\end{proof}

Finally, we summarize the previous results to collect the desired properties of the flow update.
\begin{proposition}
	\label{prop:flow-update-decreases-score}
	A flow update with gluing edge candidates preserves the marginals and has a non-increasing entropic optimal transport objective \eqref{eq:entropic-transport}.
\end{proposition}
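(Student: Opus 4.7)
The plan is essentially to assemble the three ingredients that were established immediately before the proposition, since the entropic optimal transport objective decomposes as the sum of the transport objective $\langle c, \pi \rangle$ and the entropy term $\varepsilon\, \KL(\pi | \mu \otimes \nu)$, each of which has already been controlled separately.

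First, I would invoke Lemma \ref{lemma:flow-preserves-marginals} to conclude that $\hat{\pi}$, obtained from $\pi$ by a flow update with gluing edge candidates $(\hat{\pi}_{ij})_{(i,j)\in\neigh}$, satisfies $\proj_X \hat{\pi} = \mu$ and $\proj_Y \hat{\pi} = \nu$. The lemma is stated for arbitrary edge candidates, so it applies in particular to the gluing construction of Definition \ref{def:glued-plan}. Note that Item 1 of Lemma \ref{lemma:properties-gluing-candidates} confirms that gluing edge candidates are indeed admissible edge candidates in the sense of Definition \ref{def:edge-candidates}, so the hypotheses of Lemma \ref{lemma:flow-preserves-marginals} are met.

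Next, for the transport term, Lemma \ref{lemma:transport-non-increasing} directly gives $\langle c, \hat{\pi} \rangle \leq \langle c, \pi \rangle$, again without requiring anything special about the choice of edge candidates beyond admissibility. For the entropic term, I would apply Item 3 of Lemma \ref{lemma:properties-gluing-candidates}, which is precisely the statement that $\KL(\hat{\pi} | \mu \otimes \nu) \leq \KL(\pi | \mu \otimes \nu)$ when the edge candidates are constructed by gluing. Adding $\varepsilon$ times the entropy bound to the transport bound yields the claimed inequality for the full entropic optimal transport objective.

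There is no real obstacle here: the entire work is already done in Lemmas \ref{lemma:flow-preserves-marginals}, \ref{lemma:transport-non-increasing}, and \ref{lemma:properties-gluing-candidates}, so the proof is a one-line assembly. The only thing worth being careful about is to explicitly note that the choice of gluing edge candidates is essential for the entropy bound (general admissible edge candidates would suffice for marginal preservation and the transport bound, but may fail to control the $\KL$ term), which is why the statement of the proposition restricts to gluing edge candidates.
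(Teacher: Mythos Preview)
Your proposal is correct and follows exactly the same approach as the paper's proof: invoke Lemma~\ref{lemma:flow-preserves-marginals} for marginal preservation, Lemma~\ref{lemma:transport-non-increasing} for the transport term, and Item~3 of Lemma~\ref{lemma:properties-gluing-candidates} for the entropy term, then combine. Your additional remark that Item~1 of Lemma~\ref{lemma:properties-gluing-candidates} ensures the gluing candidates are admissible is a welcome bit of care that the paper leaves implicit.
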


\begin{proof} Let $\pi$ be the initial plan, $\pihat$ a flow update obtained with gluing edge candidates. Then Lemma \ref{lemma:flow-preserves-marginals} guarantees the marginals of $\pihat$ are those of $\pi$, Lemma \ref{lemma:transport-non-increasing} proves that $\la c, \pihat \ra \le \la c, \pi \ra$ and	Lemma \ref{lemma:properties-gluing-candidates} establishes that $\KL(\pihat | \mu\otimes\nu) \le \KL(\pi | \mu\otimes \nu)$. 
\end{proof}

\subsection{Hybrid scheme}
\label{sec:hybrid-scheme-convergence}

The flow updates described in the previous section are in general unable to generate a minimizing sequence on their own. For example, if $\pi = \mu \otimes \nu$, any flow update would yield again $\pihat = \mu \otimes \nu$, since $\pi$ already minimizes the relative entropy and by Lemma \ref{lemma:properties-gluing-candidates} a flow update cannot increase it (at least when using gluing edge candidates). Conversely, if $\pi$ is the optimal unregularized plan between $\mu$ and $\nu$, a flow update cannot bring it to the optimal entropic plan, since doing so would typically involve introducing a bit of blur, which will inevitably increase the transport score.

However, flow updates can be intertwined with domain decomposition steps to provide fast removal of global curl, on which domain decomposition performs poorly. The domain decomposition steps will in turn make progress where the flow updates would be stuck and guarantee convergence to a global minimizer.

We call the combination of domain decomposition and flow updates the \textit{hybrid scheme}. Algorithm \ref{alg:Hybrid} outlines a possible implementation, where two domain decomposition iterations are followed by a flow update. Of course different schedules can be considered (e.g.~running the flow update after only every $n$-th domain decomposition iteration), and they do not change significantly the properties of the algorithm. 

\begin{algorithmfloat}[h]
	\noindent
	\textbf{Input}: initial coupling $\piInit \in \Pi(\mu,\nu)$
	
	\noindent
	\textbf{Output}: a sequence $(\iter{\pi}{k})_{k}$ of feasible couplings in $\Pi(\mu,\nu)$
	\smallskip
	
	\begin{algorithmic}[1]
		\State $\iter{\pi}{0} \leftarrow \piInit$
		\State $k \leftarrow 0$
		\Loop
		\State $k \leftarrow k+1$
		\State \algorithmicif\ ($k\%3 == 1$)\ \algorithmicthen\
		$\iter{\pi}{k} \leftarrow \text{\textsc{DomDecIter}}(\pi^{k-1}, \partA)$
		\State \algorithmicelse  \algorithmicif\ ($k\%3 == 2$)\ \algorithmicthen\
		$\iter{\pi}{k} \leftarrow \text{\textsc{DomDecIter}}(\pi^{k-1}, \partB)$
		\State \algorithmicelse  \algorithmicif\ ($k\%3 == 0$)\ \algorithmicthen\
		$\pi^k \leftarrow \text{\textsc{FlowUpdate}}(\pi^{k-1})$
		\EndLoop
	\end{algorithmic} 
	\caption{Hybrid scheme for optimal transport}
	\label{alg:Hybrid}
\end{algorithmfloat}

Due to the monotonicity properties of the flow update, it is natural to expect the hybrid scheme to converge under the same assumptions as regular domain decomposition. In the case of entropic domain decomposition, apart from boundedness of the cost, the only other crucial assumption was connectedness of the \textit{partition graph} (see \cite[Section 4.3]{BoSch2020}). We give below a simplified version of the concept that is sufficient for our purposes: 

\begin{definition}[Partition graph, adapted from \protect{\cite[Definition 7]{BoSch2020}}]
\label{def:PartitionGraph}
		The partition graph is given by the vertex set $V \assign \partA \cup \partB$ and the edge set
		\begin{align*}
			E \assign \left\{ (J, \Jhat) \subset \partA \cup \partB \,
			\middle|
			\, J \cap\Jhat \neq \varnothing\right\},
 		\end{align*}
		i.e.~there is an edge between two composite cells if their intersection is non-empty. (Recall that the composite partitions $\partA$ and $\partB$ are partitions of the discrete index set $I$ for the basic partition cells of $X$.) In this way, connectedness of the partition graph intuitively translates to the ability of mass or information to travel between arbitrary subdomains.
		\label{item:PartitionGraph}
\end{definition}

\begin{proposition}[Convergence of the hybrid scheme]
	\label{prop:convergence-hybrid}
	Let $c$ be continuous, $\veps > 0$, consider partitions in the sense of Definition \ref{def:partitions} with strictly positive basic cell masses, and assume that the partition graph is connected. Let $(\pi^k)_k$ be a sequence obtained with Algorithm \ref{alg:Hybrid}, where the flow update uses gluing edge candidates (cf. Definitions \ref{def:flow-update}, \ref{def:glued-plan}). Then $(\pi^k)_k$ converges weak* to the unique minimizer of \eqref{eq:entropic-transport}.
\end{proposition}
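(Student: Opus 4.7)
My plan is to adapt the convergence argument for pure entropic domain decomposition from \cite[Section 4]{BoSch2020}: the flow updates will enter only through their monotonicity guarantee from Proposition \ref{prop:flow-update-decreases-score}, whereas convergence to the global minimizer will be extracted from the domain decomposition substeps via weak* compactness and a fixed-point characterization. Throughout I write $J(\pi) \assign \la c,\pi\ra + \veps\,\KL(\pi|\mu\otimes\nu)$ for the full entropic transport objective.

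First I would observe that $(J(\pi^k))_k$ is non-increasing: every \textsc{DomDecIter} step solves its subdomain problems to optimality (and the cell objectives sum to $J$ up to a constant, so the global objective cannot increase), and flow updates are monotone by Proposition \ref{prop:flow-update-decreases-score}. Since $J$ is bounded below (using boundedness of $c$ and non-negativity of $\KL$), $J(\pi^k) \to J^\infty$ and the consecutive differences $J(\pi^k) - J(\pi^{k+1})$ vanish in the limit. By weak* compactness of $\Pi(\mu,\nu)$ I may pick a subsequence $\pi^{3k_n} \to \pi^\infty$ along the post-flow-update iterates and, by a diagonal argument, further assume $\pi^{3k_n+1} \to \pi^{\infty,1}$ and $\pi^{3k_n+2} \to \pi^{\infty,2}$ weak*.

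The main technical step is to pass to the limit in a single domain decomposition step. For each composite cell $J \in \partA$, Lemma \ref{lemma:continuity-restriction} (applied to $A = X_J \times Y$, whose relevant boundary is negligible under all $\pi^{3k_n}$ and $\pi^\infty$ because basic cells partition $X$ up to $\mu$-null sets and all iterates share $X$-marginal $\mu$) yields $\pi^{3k_n}\restr (X_J\times Y) \to \pi^\infty\restr(X_J\times Y)$ weak*, and in particular $\nu_J^{3k_n} \to \nu_J^\infty$. Combined with the standard weak* stability of the entropic OT optimizer with respect to its marginals (available because $c$ is continuous and $\veps>0$), this identifies $\pi^{\infty,1}$ as $\textsc{DomDecIter}(\pi^\infty,\partA)$. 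Since $J(\pi^\infty) = J(\pi^{\infty,1})$ (by lower semi-continuity of $J$ and vanishing differences), and since each cell subproblem has a unique minimizer (Proposition \ref{prop:properties-entropic-transport}\ref{item:uniqueness-entropic-ot}), $\pi^\infty$ must already be cell-optimal on every $J \in \partA$; hence $\pi^{\infty,1} = \pi^\infty$. Repeating the argument on $\partB$ yields $\pi^{\infty,2}=\pi^{\infty,1}=\pi^\infty$ and cell-optimality of $\pi^\infty$ on every composite cell of both partitions.

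Finally, I would invoke the partition-graph argument of \cite[Section 4.3]{BoSch2020}: cell-optimality on every $J \in \partA \cup \partB$, together with connectedness of the partition graph, provides global entropic potentials $(\alpha^\ast,\beta^\ast)$ with $\pi^\infty = \exp((\alpha^\ast\oplus\beta^\ast-c)/\veps)\,\mu\otimes\nu$, and Proposition \ref{prop:properties-entropic-transport}\ref{item:characterization-entropic-ot} then identifies $\pi^\infty$ as the unique minimizer of \eqref{eq:entropic-transport}. Uniqueness of the cluster point combined with weak* compactness upgrades subsequential convergence to convergence of the full sequence. I expect the main obstacle to be the passage-to-the-limit in the cell subproblem: although weak* stability of the entropic optimizer is standard, the required identification of limit marginals across cell boundaries is somewhat delicate and is precisely the reason Lemma \ref{lemma:continuity-restriction} was introduced.
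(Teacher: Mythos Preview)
Your proposal is correct and follows essentially the same route as the paper: monotonicity of the objective (with the flow update handled via Proposition~\ref{prop:flow-update-decreases-score}), weak* compactness, continuity of the \textsc{DomDecIter} map (via Lemma~\ref{lemma:continuity-restriction} and stability of entropic OT) to identify cluster points as fixed points of $S_A$ and $S_B$, and finally the partition-graph gluing of local potentials. The only minor point is your justification ``by lower semi-continuity of $J$ and vanishing differences'' for $J(\pi^\infty)=J(\pi^{\infty,1})$: lower semi-continuity alone gives only $J(\pi^\infty),J(\pi^{\infty,1})\le J^\infty$, and the missing inequality comes from continuity of the cell optimal \emph{value} (so $J(\pi^{\infty,1})=\lim J(S_A(\pi^{3k_n}))=J^\infty$); the paper's proof glosses over the same step when asserting that all cluster points share the same score.
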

For convenience we assume $c$ to be continuous here. See Remark \ref{rem:more-general-costs} for a discussion of more general costs.

\begin{proof}[Proof]
	Define by $S_A$ the map that applies an $A$ iteration to $\pi$, i.e. $\pi \mapsto$ \textsc{DomDecIter}$(\pi, \partA)$, analogously for $S_B$. These maps are continuous with respect to the weak* topology, since they are a composition of restriction of measures, projection and solution of entropic problems, which in our setting are all continuous (restriction by Lemma \ref{lemma:continuity-restriction}, solution by stability of entropic OT when the cost is continuous \cite{stability-entropic-ot}, projection is obvious). 
	Besides, these maps do not increase the score of a given coupling ---since domain decomposition is a coordinate descent algorithm---, and keep it feasible. 
	
	The iterates of Algorithm \ref{alg:Hybrid} thus verify: 	
	\begin{equation}
	\begin{aligned}
	\pi^{3\ell+1} = S_A(\pi^{3\ell}),
	\qquad&
	\pi^{3\ell+2} = S_B(\pi^{3\ell+1})
	\qquad &\tn{ for all } \ell\in \N, 
	\\
	\la c, \pi^{k+1} \ra 
	+
	\veps \KL(\pi^{k+1} | \mu \otimes \nu)
	\le\ 
	&
	\la c, \pi^{k} \ra 
	+
	\veps \KL(\pi^k | \mu \otimes \nu)
	\qquad &\tn{ for all } k \in \N,
	\end{aligned}
	\end{equation}
	the decrement for the flow updates being granted by \eqref{prop:flow-update-decreases-score}, because we are using gluing candidates.
	
	Since the couplings are supported on the compact space $X\times Y$, the sequence $(\pi^{3\ell})_\ell$ is tight, so it has weak* cluster points. Besides, by monotonicity of the score, all such cluster points share the same score, and they all lie in $\Pi(\mu,\nu)$ by continuity of the projection operator.
	
	Let $(\pi^{3\ell})_\ell$ be a subsequence (not relabeled) converging to one such cluster point $\pi^*$. Then, by continuity of the solve maps, $(\pi^{3\ell+2})_\ell = (S_B(S_A(\pi^{3\ell})))_\ell$ must converge to $S_B(S_A(\pi^*))$. This means that $\pi^*$ and $S_B(S_A(\pi^*))$ share the same score, since they are both cluster points of the original sequence. In turn, this implies that $\pi^*$ is a fixed point for both $S_A$ and $S_B$; otherwise applying $S_A$ or $S_B$ would have decreased its score.
	This follows from uniqueness of minimizers in Algorithm \ref{alg:DomDecIter}, line \ref{codeline:cell-ot-problem}, see (Prop.~\ref{prop:properties-entropic-transport}, item \ref{item:uniqueness-entropic-ot}).
	Therefore we conclude that $\pi^*$ is locally optimal on each subdomain of $\partA$ and $\partB$, which by Prop.~\ref{prop:properties-entropic-transport}, item \ref{item:characterization-entropic-ot} implies that there exist functions $\alpha_J$ on $X_J$ and $\beta_J$ on $Y$ such that
	\begin{equation}
		\label{eq:composite-cell-optimality}
		\pi^*_J 
		=
		\exp \left(\frac{\alpha_J(x) + \beta_J(y) - c(x,y)}{\veps}\right) \mu_J \otimes \nu, 
		\qquad
		\tn{for all } J \in \partA \cup \partB.
	\end{equation}
	
	Now we will build global entropic potentials from the local potentials in \eqref{eq:composite-cell-optimality}.
	Take an edge $(J, \Jhat)$ of the partition graph. Since the intersection of $J$ and $\Jhat$ is non-empty, there is some basic cell $i$ they share, which has positive mass by assumption. For $\mu\otimes \nu$ a.e. $(x,y)$ in $X_i \otimes Y$, the corresponding expressions \eqref{eq:composite-cell-optimality} for $J$ and $\Jhat$ must coincide, which means that the difference $\beta_J - \beta_{\Jhat}$  is a constant $\nu$-a.e. By connectedness of the partition graph, this property extends to every pair of composite cells, so we conclude that for every cell $J\in \partA \cup \partB$, $\beta_J = \beta^* - s_J$ $\nu$-a.e., where $\beta^* \assign \beta_{J_0}$ for some arbitrary cell $J_0$, and $(s_J)_{J\in\partA \cup \partB}$ are suitable real offsets.
	
	This means by Prop.~\ref{prop:properties-entropic-transport} \ref{item:potentials-unique-up-to-constant} that the potentials $(\alpha_J - s_J, \beta^*)$ also yield the same $\pi^*_J$ in \eqref{eq:composite-cell-optimality}. We can then obtain a global potential $\alpha^*$ in the following way: for each $x\in X$ find the cell $J\in \partA$ such that $x\in X_J$ (which is $\mu$-a.e. unique) and define 
	\begin{equation}
		\alpha^*(x) = \alpha_J(x) - s_J.
	\end{equation}
	
	Then $\pi^* = e^{(\alpha^* \oplus \beta^* - c)/\veps} \mu \otimes \nu$, since this new expression coincides with \eqref{eq:composite-cell-optimality} for every composite cell in $\partA$, and these constitute a partition of $X$. Finally, again by Prop.~\ref{prop:properties-entropic-transport} \ref{item:characterization-entropic-ot} this is the optimal coupling, this time for the global problem.
\end{proof}

\begin{remark}
	Note that in the proof of Proposition \ref{prop:convergence-hybrid} we only leverage the fact that the flow updates do not increase the score.
	The task of leveraging the structure of the flow updates to improve the convergence rate of domain decomposition seems daunting,
	since they can be interpreted as discretization of a gradient flow in a space where the objective is not geodesically convex (see end of Section \ref{sec:continuity-limit} for a detailed discussion).
	Of course, intuitively, we expect that flow updates and domain decomposition updates have complementary strengths and weaknesses (cf.~Section \ref{sec:contribution}). This intuition is supported by the numerical experiments in Section \ref{sec:numerics}.
\end{remark}

\begin{remark}[Convergence of the hybrid scheme for more general costs]
\label{rem:more-general-costs}
	As shown in \cite[Corollary 4.12]{BoSch2020} entropic domain decomposition converges to the optimal coupling for any bounded measurable cost function $c$. 
	We believe this to be the case also for the hybrid scheme, but to show it we should use a different stability result for the solve operator, in particular \cite[Theorem 4.3]{carlier-stability}, which assumes that the density of the marginals with respect to some fixed reference measure is bounded from above and away from zero. This is the case for domain decomposition after sufficiently many iterations, since the coupling will eventually have fully support due to the entropic regularization, and a lower bound on the density can be given (see \cite[Lemma 4.2, (ii)]{BoSch2020}). 
	However, a proof that this `diffusion' cannot be reverted by the flow updates would be overly technical and not adequate for the simple presentation we intend to give. At any rate, if the initialization $\pi^0$ has a density $ \RadNik{\pi^0}{\mu \otimes \nu}$ bounded away from zero, or if enough iterations are performed before the first flow update, all the subsequent iterations will satisfy the necessary density bounds. 
	The reason is that flow updates mix the marginals, and as such cannot worsen a uniform lower bound on the density.
\end{remark}

\section{Formal continuity limit and comparison with AHT scheme}
\label{sec:continuity-limit}
We now return to the setting of Section \ref{sec:hybrid-scheme-simple}. Let $n \in \N$ and let $X$ be a regular Cartesian grid in $[0,1]^d$ with $n$ grid points along each axis, let $\neigh$ be given by the standard $2d$-neighbourhood (e.g.~the 4-neighbourhood in $d=2$). We now formally discuss the limit $n \to \infty$.
We can identify a flow $w \in \flows$ with a vector measure $W \assign \sum_{(i,j) \in \neigh} w_{ij} \cdot n \cdot (x_j-x_i) \cdot \delta_{x_i} \in \meas(X)^d$. Note that $|x_j-x_i|_\infty =1/n$ for all $(i,j) \in \neigh$ with $i \neq j$.
Therefore, one has that $W \ll \mu = \sum_{i \in I} m_i\,\delta_{x_i}$ and $|\RadNik{W}{\mu}(x)|_\infty \leq 1$ for $\mu$-almost all $x$.
For $\phi \in \CC^1(X;\R^d)$ one then obtains
\begin{align*}
	\int_X \nabla \phi\,\diff W
	= \sum_{(i,j) \in \neigh} \langle \nabla \phi(x_i), x_j-x_i \rangle \cdot n \cdot w_{ij}
	= \sum_{(i,j) \in \neigh} \left[ n \cdot \phi(x_j)-n \cdot \phi(x_i)+o(1)\right] w_{ij}=o(1).
\end{align*}
Here we used the divergence constraints \eqref{eq:div-constraint-single-cell}, which also imply $\sum_{(i,j) \in \neigh} w_{ij}=1$, and $o(1)$ is to be understood with respect to the limit $n \to \infty$ and is obtained from the uniform continuity of $\nabla \phi$ on $X$. So as $n \to \infty$, any weak* cluster point of a sequence of measures $W$ will be divergence free in a distributional sense.
We interpret $\flows$ therefore as a discrete approximation of the weak* compact set
\begin{equation}
\label{eq:FormalVLimit}
\left\{v \cdot \mu \middle| v : [0,1]^d \to [-1,1]^d \tn{ measurable},
\ddiv(v \cdot \mu)=0
\right\}
\end{equation}
where $\ddiv$ denotes distributional divergence.
We do not expect that this discrete approximation will be dense in \eqref{eq:FormalVLimit} as $n \to \infty$, due to anisotropy artefacts of the grid graph. Fortunately, the combination with domain decomposition ensures that this is not a problem.
By similar arguments, for $c \in C^1(X \times Y)$, the min-cost flow objective \eqref{eq:MinCostFlow} can be written as
\begin{align*}
\sum_{(i,j) \in \neigh} w_{ij} \cdot \la c(x_j,\cdot)-c(x_i,\cdot), \frac{\nu_i}{m_i} \ra
= \int_X \la v(x), \int_Y \nabla_X c(x,y)\,\diff \pi_x(y) \ra \diff \mu(x) + o(1)
\end{align*}
where $v$ is, as above, the density of $W$ with respect to $\mu$, and we used that $\nu_i/m_i \in \prob(Y)$ is the disintegration of $\pi=\sum_{i \in I} \delta_{x_i} \otimes \nu_i$ with respect to its $X$-marginal at $x_i$.
Constructing a new coupling $\pihat$ via \eqref{eq:PiHatSimple} then corresponds to a time-step of size $1/n$.

Consider now the following minimization problem over measurable velocity fields:
\begin{equation}
\inf_{v : \ddiv(v \cdot \mu)=0} \int_X \la v(x), \int_Y \nabla_X c(x,y)\,\diff \pi_x(y) \ra \diff \mu(x) + \frac{1}{p} \int_X |v(x)|^p_p\,\diff \mu(x)
\label{eq:Linfty-AHT}
\end{equation}
For $p=2$ and $\pi=(\id,T_t)_\# \mu$ (i.e.~$\pi_x=\delta_{T_t(x)}$) the minimizing velocity field will be the $\LL^2(\mu)$-projection of $-\nabla_X c(\id,T_t)$ onto the constraint $\ddiv(v \cdot \mu)=0$, as discussed in \eqref{eq:AHTObjectiveDerivative} and below.
Conversely, the formal limit $p \to \infty$ will correspond to the minimization of the first term over the set \eqref{eq:FormalVLimit}, which is the formal continuity limit of our flow update.

Note that one also has $\Phi_{t\#} (\mu\otimes \nu)=\mu \otimes \nu$ and that formally $\Phi_t$ is invertible (if the flow of $v_t$ were well-posed) therefore $\KL(\pi_t|\mu\otimes \nu)=\KL(\Phi_{t\#}\pi_0|\Phi_{t\#}(\mu\otimes \nu))=\KL(\pi_0|\mu\otimes \nu)$. So formally the continuum limit of the flow updates keeps the entropy unchanged and the possible decrease that we observe at finite resolution is a discretization artefact.

In this sense, the flow update can formally be interpreted as a discretized $\LL^\infty$-version of the AHT scheme; i.e.,~as a gradient flow for the transport cost on the space of differentiable domain rearrangements. As we have just seen, the $\LL^\infty$-version allows for an exact discrete version of the divergence constraint as mass preserving flows on a discrete graph. The flow updates alone will in general not reach a global minimizer, due to anisotropy of the graph discretization and since it can only move fibers $\nu_i$ as a whole (recall the discussion at the beginning of Section \ref{sec:hybrid-scheme-convergence}).
The combination with domain decomposition iterations resolves both of these issues.

Note that the suboptimal stationary points of the AHT scheme remain stationary for \eqref{eq:Linfty-AHT}, including the $p \to \infty$ limit. Indeed, the constraint $\ddiv(v \cdot \mu)=0$ for $v$ implies: 
\begin{equation}
\int_X  \la v(x), \nabla \varphi(x) \ra \diff \mu(x) = 0
\quad 
\tn{for all $\varphi \in C^1(X)$.}
\end{equation}
This means that whenever the vector field $x\mapsto \int_Y \nabla_X c(x,y)\,\diff \pi_x(y)$ can be written as the gradient of a function $\varphi$, the optimal velocity field in \eqref{eq:Linfty-AHT} is identically zero. This has several implications. First, it shows that the $L^\infty$-variant would not improve the convergence behavior of the AHT scheme; the suboptimal stationary points remain the same (our motivation to consider it is the compatibility with the min-cost flow discretization). Second, we expect that the discrete version will suffer from related non-optimal points where the updates will be zero or at least very small. Consequently, it might be impossible to obtain a convergence rate for the hybrid scheme that is faster than pure domain decomposition.
This difficulty is caused by the fact that the transport cost is not geodesically convex within the space of admissible deformations (a challenge shared with the AHT scheme). A different relaxed version of the AHT scheme with entropic regularization is studied in \cite{multiflow}. It is shown that this relaxation has no sub-optimal stationary points and that it converges to the global minimizer. However, due to the aforementioned non-convexity, still no rate of convergence can be established.

\section{Numerical experiments}
\label{sec:numerics}
In this section we provide numerical experiments that show how flow updates can resolve the freezing behaviour of the domain decomposition algorithm, we study how the choice of the edge candidates $\hat{\pi}_{ij}$ influences the behaviour of the hybrid algorithm, and how flow updates relate to the multiscale scheme.

\subsection{Implementation notes}
\label{sec:implementation-notes}
A CPU multiscale implementation of domain decomposition, parallelized via MPI was presented in \cite{BoSch2020}.
In this article we apply a new GPU version, described in more detail in Appendix \ref{sec:gpu}.\footnote{Code for both CPU and GPU implementation available at \url{https://github.com/OTGroupGoe/DomainDecomposition}.}
The second component of our hybrid algorithm are the flow updates introduced in Sections \ref{sec:hybrid-scheme-simple} and \ref{sec:hybrid-scheme-general}. Their implementation can be divided into the following major steps:

\textbf{1. Compute the edge costs ${(\cbar_{ij})_{ij}}$, \eqref{eq:edge-cost}.} This requires edge candidates $(\pihat_{ij})_{ij}$ as constructed in \eqref{eq:glued-plan} which are constructed from transport plans $(\gamma_{ij})_{ij}$ from $\mu_i/m_i$ to $\mu_j/m_j$.
For $(\gamma_{ij})_{ij}$ we choose either an optimal entropic transport plan for some regularization strength $\veps_\gamma$ or the independent product plan $\mu_i/m_i \otimes \mu_j/m_j$ (which would correspond to the choice $\veps_\gamma=\infty$), see Remark \ref{remark:eps-gamma}.
Constructing $\pihat_{ij}$ from $\gamma_{ij}$ also requires to temporarily instantiate the measure $\pi_i$, which can lead to memory bottlenecks if this is done fully in parallel.

\textbf{2. Solve the min-cost flow problem \eqref{eq:MinCostFlow}.}
Many efficient solvers are available for this problem. In our experiments we use \texttt{OR-tools} \cite{ortools}. Popular alternatives are \texttt{LEMON} \cite{lemon} or \texttt{CPLEX} \cite{cplex}.

\textbf{3. Assemble the new basic cell $Y$-marginals via \eqref{eq:flow-update-cell-marginals}.} This is enough to perform a subsequent domain decomposition iteration. The new full coupling $\pi$ is not necessary.

\subsection{Single scale experiments}
\label{sec:single-scale-experiments}
As illustrated in Figure \ref{fig:freezing}, domain decomposition can asymptotically exhibit \emph{freezing} in non-optimal states in the limit of very fine cells and we propose flow updates as a remedy, as illustrated in Figure \ref{fig:domdechybridn64}. In this section we discuss a more complex numerical experiment to examine freezing, its resolution via flow updates, and the influence of the choice of edge candidates $(\pihat_{ij})_{ij}$, see Section \ref{sec:implementation-notes} and Remark \ref{remark:eps-gamma}.

\paragraph{Test problem and discretization.}
For our test problem we choose $\mu = \nu$ where $\mu$ is a measure on the square $[-1,1]^2$ with four symmetrical bumps, as pictured in Figure \ref{fig:four-bumps}, left.
In particular, $\mu$ has a 4-fold rotational symmetry, i.e.~for the counterclockwise rotation by $\pi/2$, given by
\begin{equation}
\label{eq:T0}
T_0: X\rightarrow Y,
\qquad 
[-1,1] \ni (x_1, x_2) \mapsto (-x_2, x_1)
\end{equation}
one has $T_{0\#}\mu=\mu$ and so $\pi_0 \assign (\id, T_0)_\# \mu$ is a non-optimal transport plan between $\mu=\nu$ and itself and we expect domain decomposition to freeze asymptotically when initialized with $\pi_0$, since the deviation from the optimal coupling $(\id,\id)_\# \mu$ is essentially `pure curl', making it an interesting test case for the hybrid scheme.

For numerical experiments the domain $[-1,1]^2$ and the measure $\mu$ are then discretized and represented by a uniform Cartesian grid with $N$ points along each axis.
Basic cells are then formed by squares of $s \times s$ pixels, corresponding to $n=N/s$ basic cells along each axis, each basic cell being a square with edge length $1/n=s/N$.
We choose $N \in \{32,128\}$ and $s \in \{2,8\}$ in this section and the concrete values will always be stated in the respective figures.
Finally, for comparing iterations across different resolutions we follow \cite{asymptotic_domdec_springer}, where we assign to each domain decomposition iterate a timestep $\Delta t = 1/n=s/N$ and define the measure trajectory $\R_+ \ni t \mapsto \pi_t \assign \pi^{\lfloor t/\Delta t\rfloor}$ where $\pi^{k}$ refers to the iterates of Algorithm \ref{alg:DomDec}, initialized with $\iterz{\pi} \assign \pi_0$. 
To keep the comparison consistent, for the hybrid scheme we will only track the number of domain decomposition iterations, or alternatively, we consider the flow update to be integrated at the end of the $B$-partition iteration. Hence we construct trajectories from the iterates of Algorithm \ref{alg:Hybrid} as follows: 
\begin{equation}
	\pi_t \assign \tilde{\pi}^{\lfloor t/\Delta t \rfloor},
	\quad\tn{with }
	\tilde{\pi}^{2\ell + m} = \pi^{3\ell + m},
	\quad\tn{for all }
	\ell \in \N,\ m \in \{0,1\}.
\end{equation}
For the global entropic regularization parameter in \eqref{eq:entropic-transport} we choose $\varepsilon=(\Delta x / 2)^2$, where $\Delta x = 2/N$ is the distance between adjacent pixels. With this regularization strength the entropic blurring is of the scale of the discretization, so we operate close to the unregularized regime. 
With larger $\veps$, the number of required domain decomposition iterations would decrease (since the freezing effect is less severe with more blur), as would the number of Sinkhorn iterations to solve each subproblem. On the other hand, the approximate support of $\pi_t$ (where mass is above the threshold of numerical precision) would increase and require more memory for storage.
The curl that has to be removed by the flow updates is by definition non-local and thus on a length scale above the entropic blur scale. The flow updates are therefore not affected as much by a change in $\veps$.

\begin{figure}[bt]
	\centering
	\includegraphics[width=\linewidth]{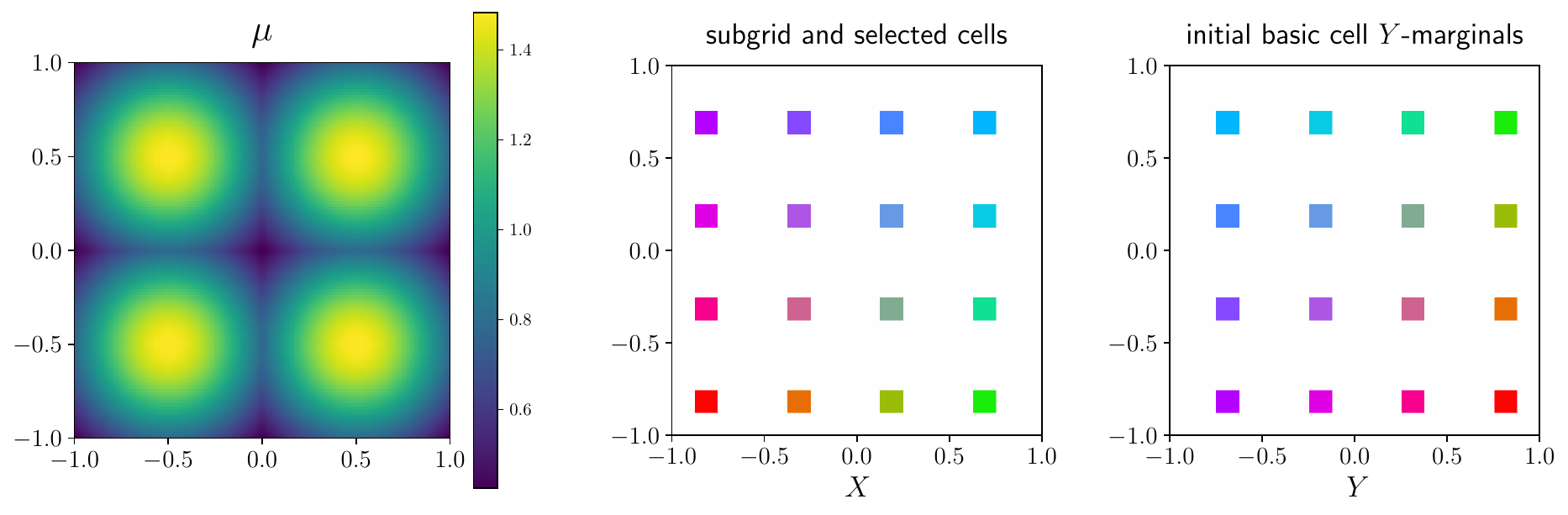}
	\caption{Left: Marginal $\mu=\nu$ for the experiments of Section \ref{sec:single-scale-experiments}. Center: Partition of $X$ into basic cells with $n = 16$ cells along each axis. Some basic cells are highlighted in color. Right: To visualize a transport plan $\pi \in \Pi(\mu,\nu)$, for each basic cell $X_i$ highlighted in color in the center panel, we show the $\nu$-density of the corresponding $Y$-marginal $\nu_i=\proj_Y( \pi \restr (X_i\times Y))$, cf.~\eqref{eq:basic-cell-Y-marginal}, in the same color on $Y$.
	Here this is exemplarily shown for the initialization $\pi^0=(\id,T_0)_\# \mu$ where $T_0$ is a rotation by angle $\pi/2$, \eqref{eq:T0}.
	}
	\label{fig:four-bumps}
\end{figure}

\paragraph{Visualization of trajectories.}
Visualizing the evolution of a coupling between 2-dimensional measures is challenging. 
We approach this issue as follows: We first color selected regions $A_i$ of $X$ as in Figure \ref{fig:four-bumps}, center.
Then for each $A_i$ we compute the density of the $Y$-marginal of $\pi \restr (A_i \times Y)$ with respect to $\nu$, i.e.~the function
$$\RadNik{\left(\proj_Y \pi\restr (A_i \times Y)\right)}{\nu}$$
which takes values in $[0,1]$ on $Y$ and we show this density in the same color as $A_i$. For the counterclockwise rotation $T_0$ and the induced plan $\pi_0$ this is done in Figure \ref{fig:four-bumps}, right. When the sets $A_i$ are (unions of) basic cells, this visualization can conveniently be combined with the domain decomposition algorithm.

\begin{figure}[bt]
	\centering
	\includegraphics[width=\linewidth]{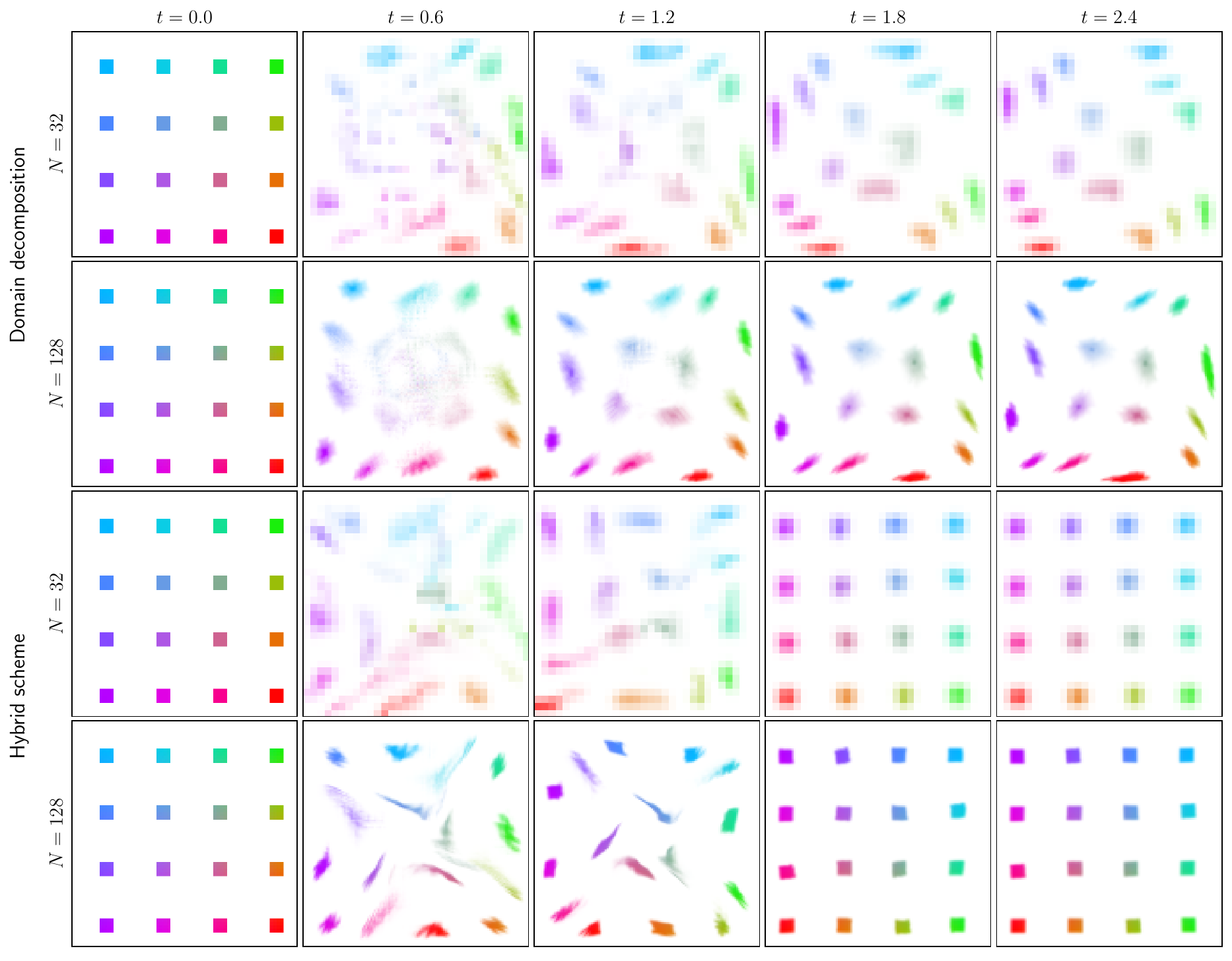}
	\caption{Domain decomposition and hybrid trajectories for $s = 2$, $\veps_\gamma=(\Delta x / 2)^2$. Domain decomposition alone clearly exhibits freezing. On the other hand, the hybrid scheme achieves n near-optimal configuration at approximately the speed for $N = 32$ and $N = 128$.}
	\label{fig:domdechybridcompare}
\end{figure}

\paragraph{The hybrid scheme resolves the freezing behavior consistently over discretization scales.}
The evolution of $\pi_0$ under the pure domain decomposition algorithm and the hybrid scheme is visualized in Figure \ref{fig:domdechybridcompare} for fixed $s=2$ and different image sizes $N \in \{32,128\}$, i.e.~for cell resolutions $n \in \{16,64\}$.
As expected, due to the presence of a global rotation, domain decomposition is already quite slow for $N=32$ and slows down further for $N=128$ (even when accounting for the time steps associated with each iteration). To appreciate this in detail observe the slow movement of the colored blobs close to the boundary. 

In contrast, the hybrid scheme evolves quickly towards the optimal coupling, with approximately equal speed for both $N \in \{32,128\}$, indicating that problems can consistently be solved also at fine resolutions.
Close to the optimal coupling the flow updates stop, in the sense that the optimal flows in \eqref{eq:MinCostFlow} are zero. This means that at the flow updates are no longer able to reach a better configuration and the residual optimization is done purely by the domain decomposition iterations.

\begin{figure}[bt]
	\centering
	\includegraphics[width=0.7\linewidth]{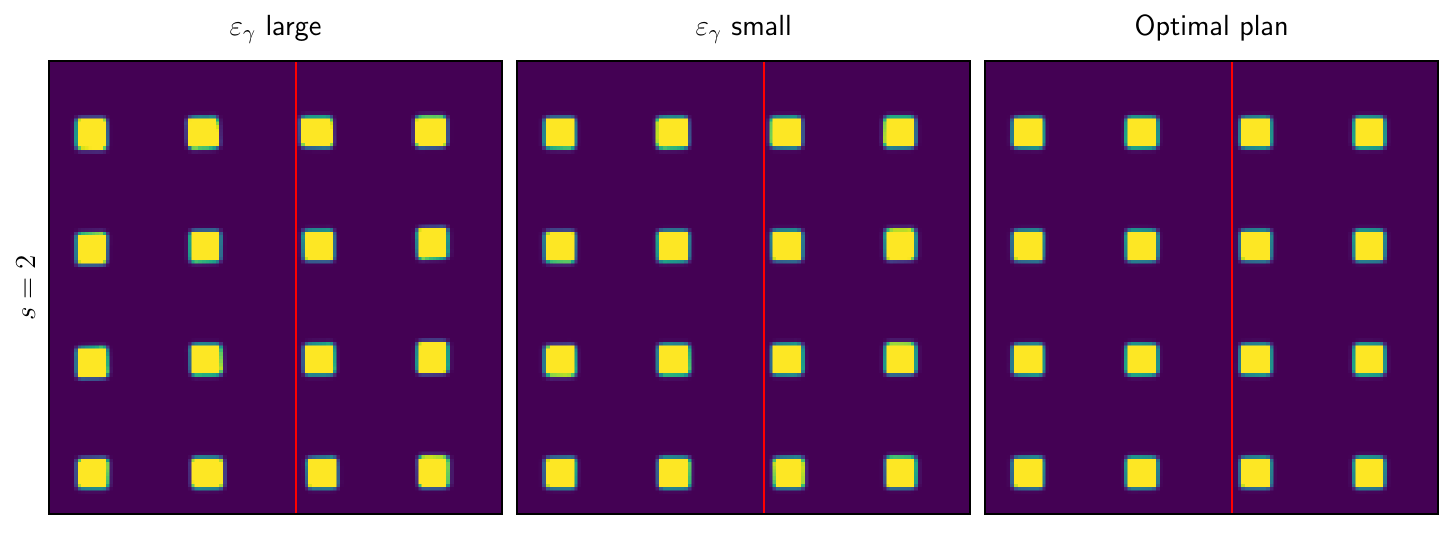}
	
	\vspace{-2mm}
	
	\includegraphics[width=0.7\linewidth]{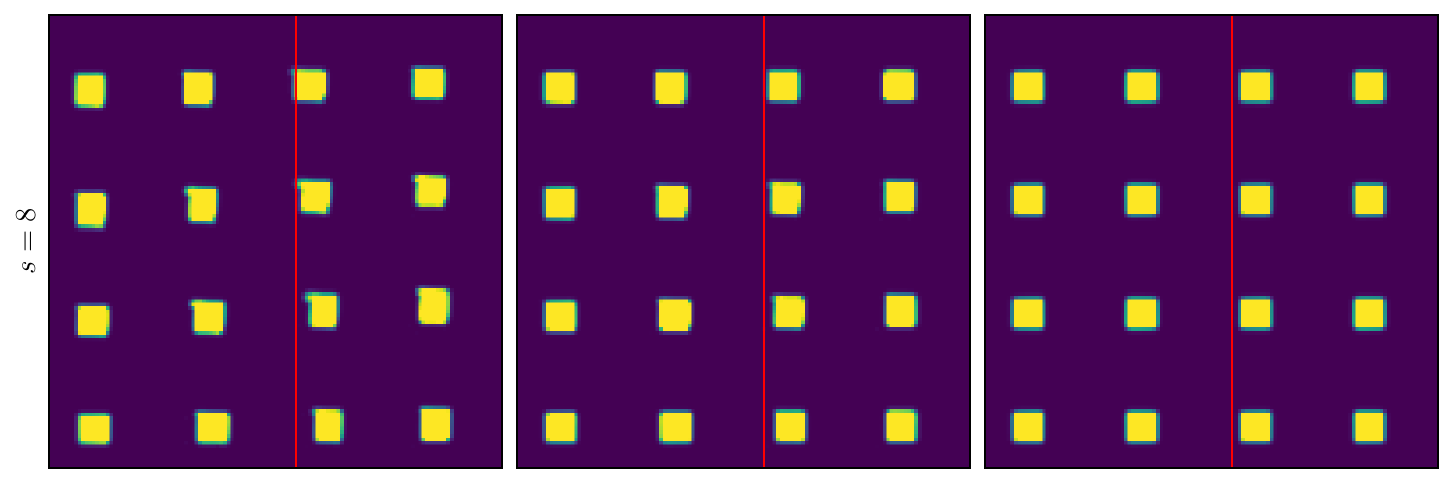}
	\caption{$Y$-marginals of selected basic cells at $t=4$, after the flow updates have stopped, for $N = 128$ and two different cellsizes $s$. The red line is drawn in the same location for all the pictures for reference. Note the difference in misalignment for $\veps_\gamma$ large and small (recall that $\veps_\gamma$ refers to the regularization strength used to compute the edge candidates, see Section \ref{sec:implementation-notes} for more details).} 
	\label{fig:comparison-eps-gamma}
\end{figure}

\paragraph{Influence of the edge candidates and relation to cellsize.}
The initial coupling $\pi_0$ features a strong global suboptimal rotation and thus we expect that edge candidates for all values of $\veps_\gamma$ will yield similar or even identical flow updates.
The intuitive reason is that, in this regime, the edge costs $(\cbar_{ij})_{ij}$ are dominated by the large rotation whereas the fine substructure within the basic cells has little influence.
Closer to optimality, on the other hand, the remaining curl is smaller and a more accurate estimation of the potential benefit of a flow update becomes beneficial.

Figure \ref{fig:comparison-eps-gamma} shows an example of residual rotation after the flow updates have stopped. 
We plot the hybrid trajectories for $N = 128$, with two different choices of cellsize ($s = 2$ and $s=8$) and for $\veps_\gamma$ being `small' ($(\Delta x/2)^2$) and `large' ($\infty$) at $t = 4$ (as we saw in Figure \ref{fig:domdechybridcompare} the flow updates stop approximately at $t = 2$). For comparison we show the optimal plan.
Our visualization choice for this plot consists in showing the aggregate cell $Y$-marginals for the same basic cells as in previous examples, so that one can focus on the displacement with respect to the optimal coupling.
The residual displacements are on the order of one or two pixels.

Cell size has the largest influence on the residual displacement. Smaller cell sizes allow for a finer rearrangement of mass, which is specially noticeable close to optimality. This must be weighed against an increased size and computational complexity of the associated min-cost flow problems. At large cell sizes, iterates with a smaller value of $\veps_\gamma$ reach a smaller residual displacement, as the cost coefficients in the flow update can capture the benefit of a rearrangement more accurately.

To sum up, the hybrid scheme iterates converge fast towards the global optimum for a range of initializations and resolutions. 
When flow updates stop to make progress, there remains a residual displacement on the scale of one or two pixels, that depends on the cell size and the level of resolution used in the edge candidates.

\subsection{Flow updates and multiscale domain decomposition}
In \cite{BoSch2020} a multiscale version of domain decomposition was presented and it is straight-forward to combine this with flow updates.
However we observed that in this combined scheme flow updates were almost always zero.
The rare non-zero updates were usually local and would also have been obtained by the next domain decomposition iteration.
 
While this observation is somewhat disappointing from the perspective of the flow updates, it once more underlines the efficiency of the multiscale scheme, as observed in \cite{BoSch2020} where only a fixed number of multiscale iterations was required for solving large problems with high accuracy, where the number of iterations only increases logarithmically with the image size.
Freezing does not seem to be an issue in combination with the multiscale scheme.
Intuitively it appears plausible that the multiscale scheme does not introduce substantial curl as it moves from coarse to fine scales.
A theoretical confirmation of this fact would be an interesting and challenging problem for future work.

\subsection{Quantitative analysis and comparison with multiscale domain decomposition}
\label{sec:quantitative-comparison}
\begin{figure}[bt]
	\centering
	\includegraphics[width=0.7\linewidth]{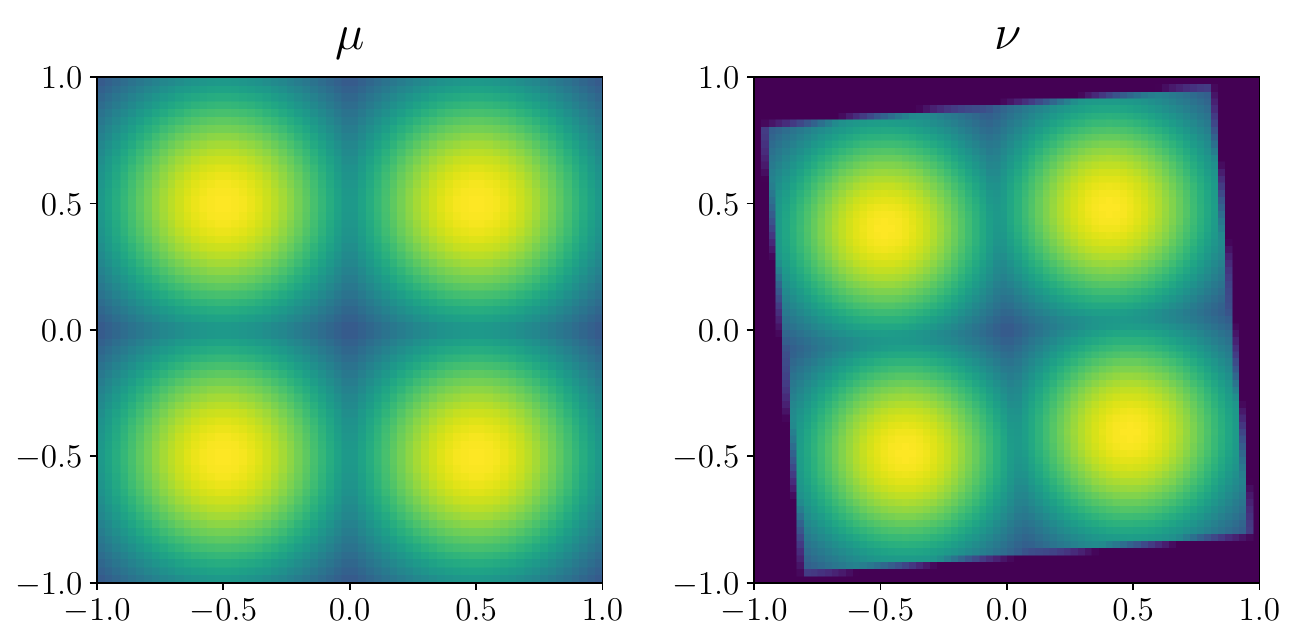}
	\caption{Initialization for the experiments in Section \ref{sec:quantitative-comparison}. The $Y$-marginal is obtained by applying a rotation with angle $\theta$ (above, $\theta = 5^\circ$) to the marginal $\mu$. This rotation provides us with an initialization of the transport plan for the single-scale experiments.}
	\label{fig:multiscale-init}
\end{figure}

In Section \ref{sec:single-scale-experiments} a qualitative comparison between basic domain decomposition and the hybrid scheme was given.
It was demonstrated that domain decomposition becomes arbitrarily slow on large problems where the initialization contains curl and that the hybrid scheme resolves this issue consistently.
In this Section we perform a quantitative comparison between multiscale domain decomposition, single-scale domain decomposition, and the hybrid scheme.
We focus on a modified version of the experiment proposed in Figure \ref{fig:four-bumps} where we obtain the $Y$-marginal $\nu$ as a rotation of the $X$ marginal $\mu$ by an angle $\theta$ and a contraction, such that the rotated image fits into $[-1,1]^2$. This transformation map also serves as non-optimal initialization for the transport plan in the single-scale solvers (domain decomposition and hybrid). An example is given in Figure \ref{fig:multiscale-init}. 
We monitor the primal score of the algorithms, compute the relative primal-dual gap (using for the dual score the one obtained from multiscale domain decomposition) and plot its evolution with respect to both the iteration number and the running time.
The results are summarized in Figure \ref{fig:chase-four-bumps}.

As before, basic domain decomposition at a single scale decreases the score slowly, the effect becoming more pronounced as $N$ increases (while keeping $s$ fixed).
The hybrid scheme decreases the score much more quickly in the fist stage of the algorithm. We then observe the stop of the flow updates and a slow decrease in score as the domain decomposition iterations remove the residual deformations at a similar slope as pure domain decomposition.
The multiscale scheme is clearly the strongest algorithm in this comparison. It consistently reaches the smallest PD gap in a short time.
This is particularly true for $\theta=20^\circ$.

For $\theta=5^\circ$ the initialization $\pi^0$ is presumably relatively close to being optimal. In this case we see that the flow updates in the hybrid scheme stop at a time that is comparable with the runtime of the multiscale scheme.
The flow updates stop at a higher score, but the relative difference in these scores decreases as the resolution increases.
This suggests that flow updates could be a relevant algorithmic alternative when a good approximate initial coupling is known and thus improving this initialization could be preferable to re-solving from scratch.
However, for this to become fully viable it seems that additional work is required to make flow updates more efficient and more accurate.

\begin{figure}
	\includegraphics[height=0.62\linewidth]{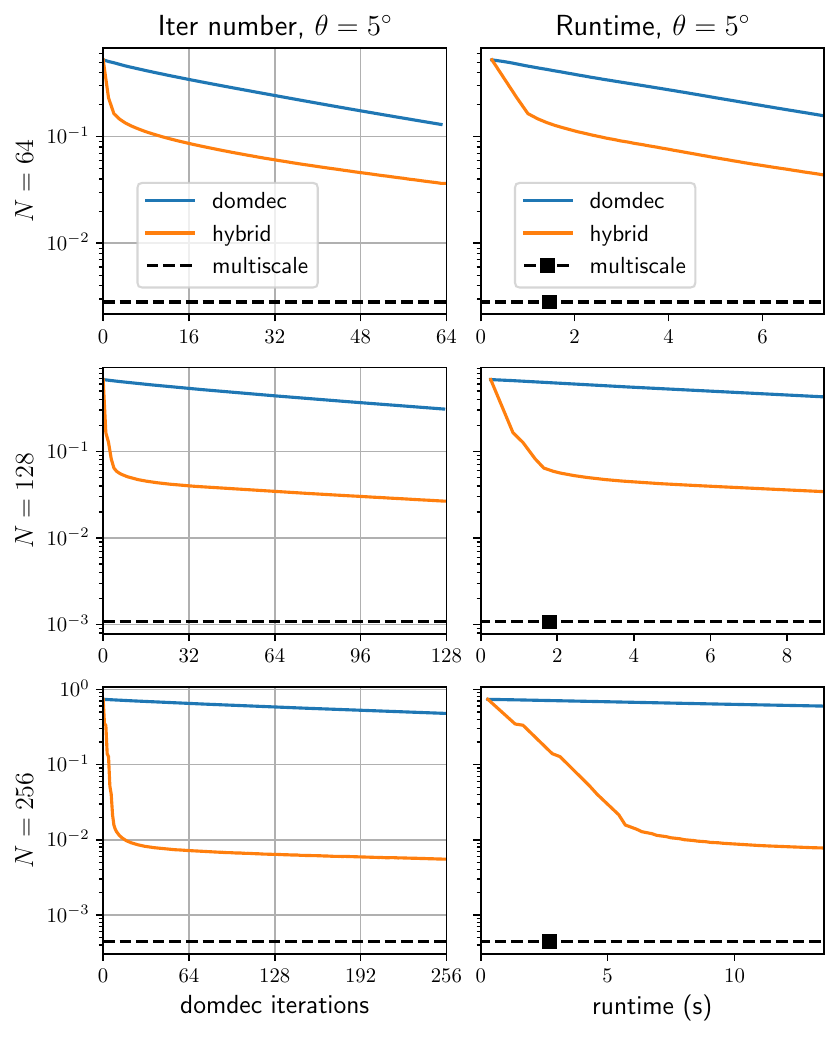}
	\includegraphics[height=0.62\linewidth]{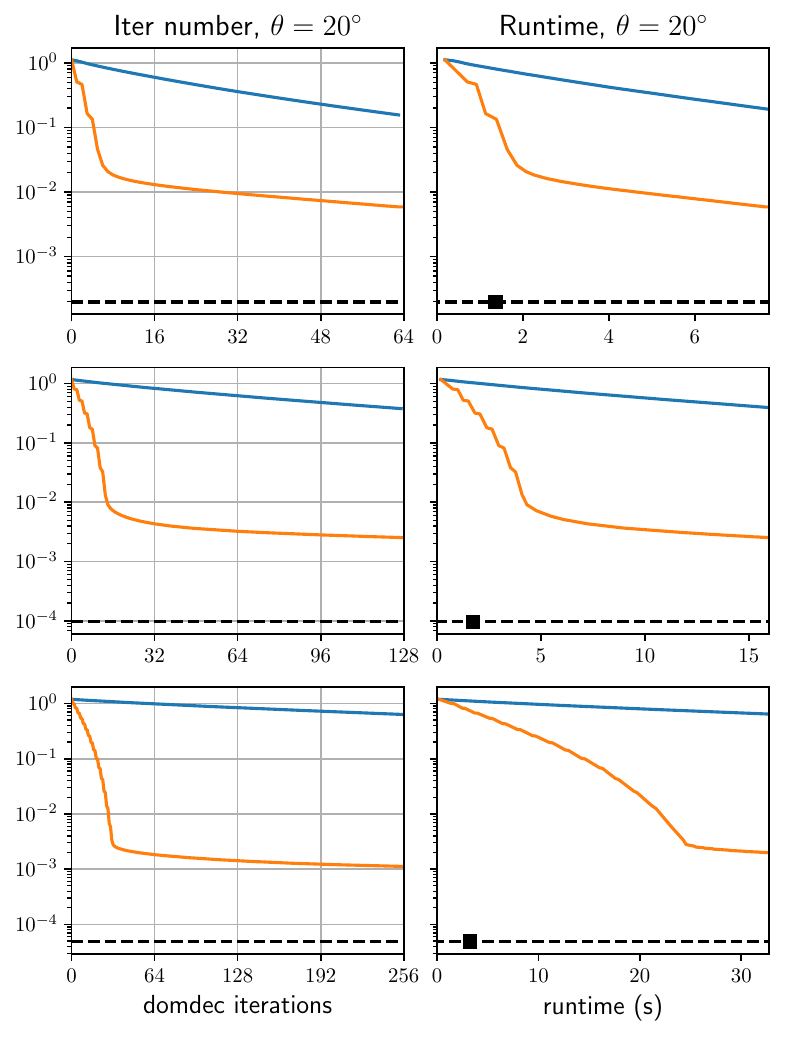}
	\caption{Relative primal-dual gap as a function of the iteration count and runtime, for $\theta \in \{ 5^\circ, 20^\circ\}$ (see Figure \ref{fig:multiscale-init}), $N \in \{64,128,256\}$ and $s=2$. The black square in the runtime column indicates the runtime of the multiscale scheme.}
	\label{fig:chase-four-bumps}
\end{figure}

\section{Conclusion}

In this article we introduced flow updates as a remedy for the freezing of singlescale domain decomposition for entropic optimal transport.
These updates can be interpreted as an $\LL^\infty$-variant of the famous Angenent--Haker--Tannenbaum scheme.
This specific variant has the advantage that it can be discretized as a min-cost flow problem that exactly preserves the marginals of a discrete transport plan. It is always well-posed (provided the set of basic cells $I$ is finite) and can also operate on transport plans that are not supported on the graph of a map, e.g.~as in entropic optimal transport.
In addition it can be combined in a natural way with domain decomposition, leading to an algorithm that converges to the globally optimal solution.

One limitation is that a global min-cost flow problem on the set of basic cells needs to be solved at each update, implying a trade-off between the size of this problem and its accuracy in predicting favourable flows.
We find that far from the optimal solution, flow updates work well, even on coarse grids, whereas closer to the optimal solution some residual suboptimality remains that cannot be resolved by flow updates.

We observe that the hybrid scheme works much faster than single scale domain decomposition and consistently resolves the issue of freezing.
We also observe that the multiscale domain decomposition avoids the issue of freezing and in general is faster than the hybrid scheme.
The latter could be a viable alternative in cases where a good approximate initial coupling is available.
  
Methods that solve the optimal transport problem by gradually improving a given initial coupling are conceptually appealing and it would be interesting to study whether flow updates can be improved to become truly competitive with multiscale domain decomposition.
Studying variants of the AHT scheme that are well-posed gradient flows on the space of transport plans and reliably reach the globally optimal solution also seems an interesting theoretical question in its own right.
Other open questions are whether it  can be established theoretically that multiscale domain decomposition avoids freezing,  or whether domain decomposition can be applied to multi-marginal transport.
 
\section*{Acknowledgements}
IM and BS were supported by the Emmy Noether programme of the DFG.
We thank the anonymous referees for their valuable suggestions and comments.

\bibliography{references}{}
\bibliographystyle{plain}

\newpage

\appendix
\section{A GPU implementation of domain decomposition for entropic optimal transport}
\label{sec:gpu}
An important factor in the adoption of entropic optimal transport has been the availability of fast implementations of the Sinkhorn algorithm on GPUs such as \texttt{keops} and \texttt{geomloss} \cite{feydy_keops,feydy_geomloss}.
It is thus natural to consider using these solvers for the cell subproblems in domain decomposition. However, in each domain decomposition iteration one must solve many small problems with irregular shapes on different domains, which in principle is not a GPU-friendly task.

Section \ref{sec:gpu-domdec} describes the necessary adaptations to make domain decomposition amenable to GPU computing when the marginals $\mu$ and $\nu$ live on low-dimensional, regular grids. Section \ref{sec:multiscale-experiments} compares its performance against CPU-based parallel domain decomposition, and against a performant GPU implementation of the global Sinkhorn algorithm.

\subsection{Implementation details}
\label{sec:gpu-domdec}

The main obstacles for a performant GPU implementation of domain decomposition are the following:

\begin{itemize}
	\item The subproblems in domain decomposition are numerous, small, and with diverse shapes. In contrast, state-of-the-art implementations of the Sinkhorn algorithm, such as \cite{feydy_keops,feydy_geomloss}, perform best when working with monolithic tensors of large size.
	\item In previous implementations of domain decomposition, the current cell $Y$-marginals $(\nu_i)_{i\in I}$ are stored in a sparse structure. However, GPUs stream their operations best on contiguous data. 
\end{itemize}
These issues are addressed by the following adaptations.

\paragraph{Bounding box representation.} The basic cell $Y$-marginals $(\nu_i)_{i\in I}$ are now stored in a \textit{bounding box} structure. This consists of an array \texttt{data} of shape $(|I|, s_1, ..., s_d)$ and an array \texttt{offsets} of shape $(|I|, d)$. For each basic cell $i \in I$ the slice \texttt{data[$i$]} contains a translated, truncated version of $\nu_i$ on a small regular grid of size $s_1 \times \ldots \times s_d$, and the offset of this translation is saved in \texttt{offsets[$i$]}. This translation is chosen such that the dimensions of the bounding box $(s_1, ..., s_d)$ are minimal, while containing all points of $\nu_i$ above a truncation threshold. Figure \ref{fig:boundingbox} exemplifies this bounding box representation. We refer to the set of basic cell $Y$-marginals in bounding box structure as $\nu_I$.

\begin{figure}[hbtp]
	\centering
	\includegraphics[width=\linewidth]{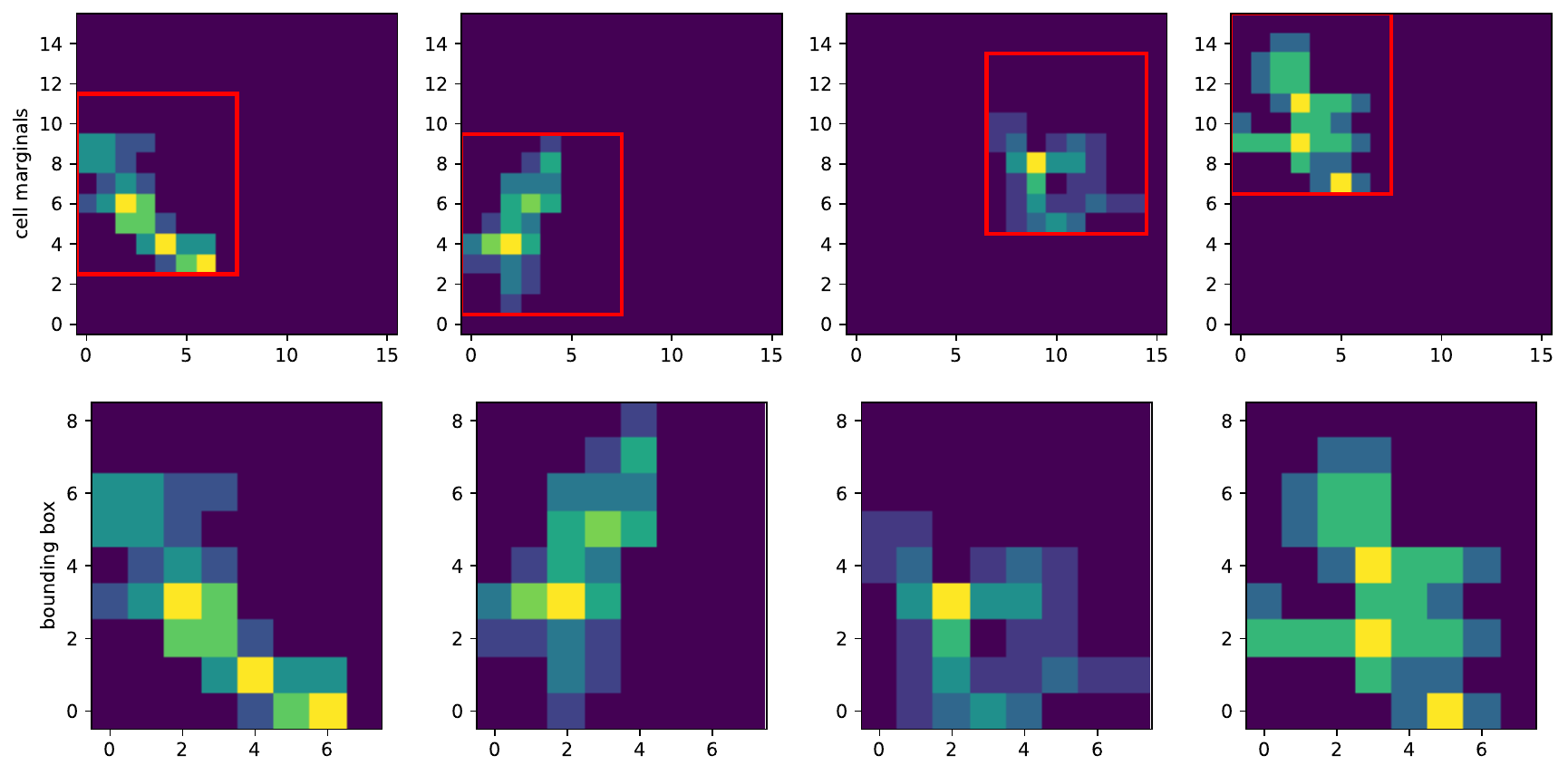}
	\caption{Top row: cell marginals $(\nu_i)_{i \in I}$ for $I = \{0,1,2,3\}$ and $Y = \{0,...,15\}^2$. The maximum width and height of their supports is computed, and a red bounding box with these dimensions is pictured. 
		Bottom row: using these maximum dimensions we can store the marginals in a compressed and contiguous fashion. 
		The original coordinates of the bottom-left corner of each marginal are saved in the tensor \texttt{offsets}.}
	\label{fig:boundingbox}
\end{figure}

\paragraph{Basic cell manipulation.} For solving the cell problems in partition $\partGeneric$  one must aggregate basic cell marginals into composite cell marginals as follows:
\begin{equation}
\label{eq:get-composite-cell-marginal}
\nu_J \assign \sum_{i\in J} \nu_i
\qquad
\tn{ for each $J$ in $\partGeneric$.}
\end{equation}
For $(\nu_i)_{i\in I}$ given in the bounding box representation $\nu_I$, one must take the original offsets and widths of each basic cell marginal into account to compute the corresponding composite cell marginals.	We implement this operation in a function \textsc{BasicToComposite}, which returns the bounding box representation of the composite cell marginals, denoted by $\nu_{\partGeneric}$. In order to obtain a performant implementation we devised a custom CUDA kernel for this task.

The composite cell $X$-marginals stay constant for any given partition $\partGeneric$, and can also be cast into a bounding box structure $\mu_\partGeneric$.

\paragraph{Custom CUDA Sinkhorn.} To achieve the best performance for our particular type of problems, we implement the Sinkhorn reduction directly in CUDA. Our implementation has the following features: 
\begin{itemize}
	\item It is a batched implementation, i.e.~several problems can be solved in parallel in the same kernel call.
	As input it takes a set of composite $X$- and $Y$-cell marginals $\mu_\partGeneric$ and $\nu_\partGeneric$ (in bounding box format), as well as an initialization for the dual potentials on the $X$-side, $\alpha^{\rm init}_\partGeneric$, and returns the optimal dual potentials $\alpha_\partGeneric$ and $\beta_\partGeneric$.
	It can however also be used to solve a single problem.
	\item Following \cite{feydy_keops,feydy_geomloss}, the Sinkhorn iterations are performed in the logarithmic domain, to avoid issues with numerical instability.
	\item Again following \cite{feydy_keops, feydy_geomloss}, it is \textit{online}, that is, it computes the transport cost matrix on the fly and does not store it. This allows us to keep the memory use linear in the problem size. 
	\item For the squared cost we provide a \textit{separable} implementation of the Sinkhorn loop, as proposed in \cite{Solomon-siggraph-2015}, which reduces its computational complexity.
	\item We design it to be particularly performant at solving many small problems in parallel. Since in our case every individual marginal is relatively small (especially in the separable implementation), one can load each dual variable to shared memory and compute each whole line of the Sinkhorn's logsumexp reduction in a single CUDA thread. Since each given cell dual potential is used for multiple Sinkhorn reductions (corresponding to computing each entry of the conjugate dual potential), the GPU's shared memory is used efficiently and communication time is reduced significantly.
	This contrasts with usual strategies for computing reductions on GPUs \cite{reduction_CUDA}, which are optimized for large tensors sizes and typically employ several threads processing synchronously different parts of the data, with a subsequent additional synchronization step.
\end{itemize}
In the following we refer to this implementation as \textsc{SinkhornGPU}.

\paragraph{Online reduction for basic cell marginals.} Upon convergence of the subproblems on the composite cells, one must compute the new basic cell $Y$-marginals $\nu_I$, see \eqref{eq:basic-cell-Y-marginal}. 
A naive way to obtain these would be to first instantiate the optimal composite cell plans $\pi_\partGeneric$ and then perform a reduction on basic cells. However, this would be very memory-intensive. Instead, the partial marginals can be computed directly via a partial logsumexp reduction, which corresponds to a Sinkhorn half-iteration on the composite cells, restricted to the basic cells, as follows (for simplicity stated on a discrete problem):
\begin{align*}
\nu_i(\{y\})
&=
\sum_{x\in X_i}
\pi_J(\{(x,y)\})
=
\sum_{x\in X_i}
\exp \left(
\frac{\alpha_J(x) + \beta_J(y) - c(x,y)}{\veps}
\right)		
\mu(\{x\})\nu(\{y\})
\nonumber
\\
&=
\exp \left(
\frac{\beta_J(y)}{\veps}
\right)	
\nu(\{y\})
\sum_{x\in X_i}
\exp \left(
\frac{\alpha_J(x) - c(x,y)}{\veps}
\right)		
\mu(\{x\})
\nonumber
\\
&=
\exp \left(
\frac{\beta_J(y)-\tilde{\beta}_{J,i}(y)}{\veps}
\right)	
\nu(\{y\})
,
\\
\tn{where }
\tilde{\beta}_{J,i}(y)
&\assign
- \veps \log \sum_{x\in X_i}
\exp \left(
\frac{\alpha_J(x) - c(x,y)}{\veps}
\right)		
\mu(\{x\}).
\end{align*}
This can be done efficiently thanks to the enhancements outlined above. 
We refer to the function that computes the new basic $Y$-marginals from the composite ones and the entropic potentials as \textsc{CompositeToBasic}.

\paragraph{Batching and clustering.}
While the bounding box format for the cell marginals provides a drastic reduction in memory compared to a dense format, it is not as memory efficient as more sophisticated (but less GPU-affine) sparse structures. This effect becomes more severe if a very large number of marginals has to be represented by a common bounding box size: a single large problem can increase the memory demand for all other problems in an unnecessary way. For these two reasons we solve the composite cell problems in batches, and these batches are \textit{clustered} according to bounding box size. This means that fewer composite cell problems need to be held in memory at any given time, and memory bandwidth is used more efficiently due to more appropriate bounding box sizes.
An illustration of bounding box clustering and batching is shown in Figure \ref{fig:clustering} and a brief discussion of its efficiency is given in Section \ref{sec:multiscale-experiments}.
Without this batching we would run of of memory for the largest problems considered in our experiments.

We will refer to the function that constructs the batches as \textsc{CreateBatches}. It divides the current partition $\partGeneric$ into a set of subpartitions given by $(\partBatch_b)_{b=1}^{B}$. Note that every batch $\partBatch_b$ of composite cells is associated to a batch of basic cells given by  $\indexBatch_b \assign \cup_{J \in \partBatch_b} J$.
In addition, we implement a function \textsc{CombineBatches} that combines the solutions of the separate batches.

\paragraph{Other components.} The features outlined above are specific to adapting domain decomposition to GPU computing. There are other implementation details that are necessary for domain decomposition to be efficient on any architecture, such as coarse-to-fine solving, epsilon scaling, truncation and balancing. All these are described in \cite[Section 6]{BoSch2020} and can be adapted to the GPU without difficulties, since most steps are parallelizable operations on the basic cell marginals. 

Algorithm \ref{alg:DomDecIterGPU} summarizes a practical implementation of the domain decomposition iteration on the GPU. We refer to this as \DomDecGPU.

\begin{algorithmfloat}[hbtp]
	\newcommand{\myitem}{\textbullet\,}
	\noindent
	\textbf{Input}:
	
	\myitem feasible basic $Y$-marginals $\nu_I$, in bounding box format.\\
	\myitem partition $\partGeneric$\\
	\myitem initialization for scaling factors $\alpha^{\rm init}_\partGeneric$
	
	\noindent
	\textbf{Output}: new $\nu_I$ and $\alpha_\partGeneric$.
	\smallskip
	
	\begin{algorithmic}[1]
		\ForAll{$\partBatch \in \textsc{CreateBatches}(\partGeneric)$}
		\State $\tilde{I} \leftarrow \cup_{J\in \partBatch}\ J$ 
		\State $\nu_{\partBatch} \leftarrow \textsc{BasicToComposite}(\nu_I, \partBatch) $
		\Comment{combine basic into composite cells}
		\State $\alpha_{\partBatch}, \beta_{\partBatch} \leftarrow \textsc{SinkhornGPU}( \mu_{\partBatch}, \nu_{\partBatch}, \alpha^{\rm init}_{\partBatch})$ 
		\Comment{solve entropic problems with Sinkhorn}
		\State $\nu_{\indexBatch} \leftarrow \textsc{CompositeToBasic}(\mu_{\indexBatch}, \nu_{\partBatch}, \alpha_{\partBatch}, \beta_{\partBatch})$
		\Comment{get basic cell $Y$-marginals}
		\State $\textsc{BalanceGPU}(\mu_{\indexBatch},\nu_{\indexBatch}, \partBatch)$
		\Comment{balance measures in each composite cell}
		\State $\textsc{TruncateGPU}(\nu_{\indexBatch})$
		\Comment{remove negligible mass points}
		\EndFor
		\State $\nu_I \leftarrow \textsc{CombineBatches}(\nu_{\indexBatch_1},...,\nu_{\indexBatch_B})$
		\State $\alpha_\partGeneric \leftarrow \textsc{CombineBatches}(\alpha_{\partBatch_1},...,\alpha_{\partBatch_B})$
	\end{algorithmic}
	\caption{Practical implementation of Algorithm \ref{alg:DomDecIter} on the GPU}
	\label{alg:DomDecIterGPU}
\end{algorithmfloat}

\subsection{Numerical experiments}
\label{sec:multiscale-experiments}

\subsubsection{Setup}
\label{sec:preliminaries-numerics}

\paragraph{Compared algorithms and implementations.} In this Section we compare our GPU domain decomposition (\DomDecGPU) to two related algorithms: parallel domain decomposition on the CPU with parallelization via MPI (\DomDecMPI, using the implementation of \cite{BoSch2020}) and the Sinkhorn algorithm on the GPU without domain decomposition (\SinkhornGPU).
The former uses a dense matrix-multiplication Sinkhorn with absorption and automatic $\veps$-fixing, see \cite[Section 7.4]{BoSch2020} for more details. For the latter we use the implementation \SinkhornGPU\ discussed above, which compares favorably to an implementation based on \texttt{keops/geomloss} in all the range of problem sizes considered (see Figure \ref{fig:comparison-sinkhorn-solvers}).

\paragraph{Hardware.} We ran our experiments at GWDG's\footnote{\url{https://gwdg.de/}} Scientific Compute Cluster, which uses a Linux operating system.
We run \DomDecMPI\ on a single compute node with an Intel Xeon Platinum 9242 Processor containing 48 cores (we use 1 master core and 16 working cores). 
For the GPU algorithms we use a node equipped with an Intel Xeon Gold 6252 Processor with 24 cores (we use 1) and an NVIDIA V100 with 32 GB of memory.

\begin{figure}[hbt]
	\centering	\includegraphics[width=0.5\linewidth]{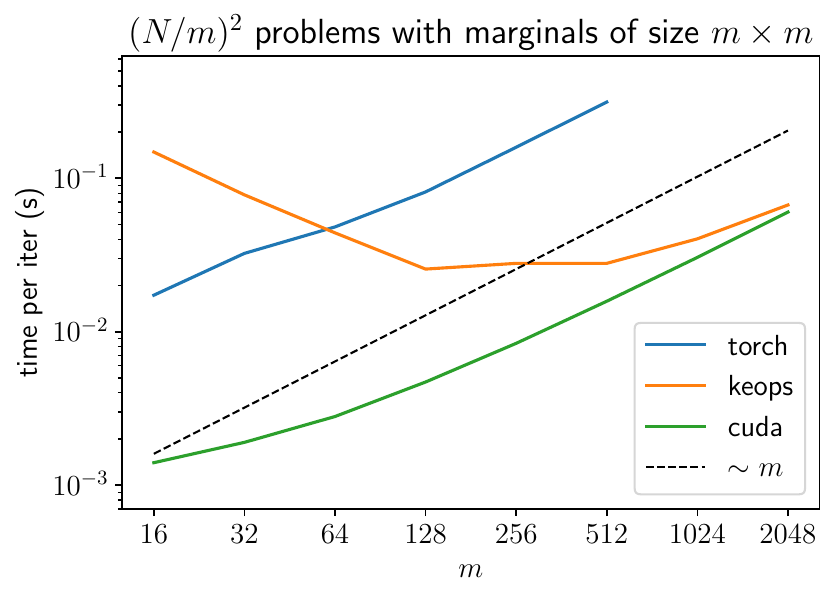}
	\caption{Comparison of Sinkhorn GPU implementations with different backends: Time per Sinkhorn iteration on a batch of problems of the same size. All solvers use log-domain and separable kernels. The \texttt{torch} solver is dense, \texttt{keops} uses the homonymous library, and \texttt{cuda} stands for our custom CUDA implementation. To observe dependence both in batch dimension and in problem size, we solve simultaneously $(N/m)^2$ problems with size $m^2$, for $N = 2048$. The theoretical complexity grows linearly in $m$, which is captured by the \texttt{torch} solver (with a large, constant overhead). The \texttt{keops} solver shows additional overhead for small problem sizes but performs faster on large problems. Our implementation is efficient on large batches of small problems, without compromising its efficiency for large problems.}
	\label{fig:comparison-sinkhorn-solvers}
\end{figure}

\paragraph{Stopping criterion and measure truncation.} 
We use the $L^1$ marginal error of the $X$-marginal after the $Y$-iteration as stopping criterion for the Sinkhorn iterations. This error is required to be smaller than $||\mu_\partGeneric|| \cdot \tn{Err}$, where $\mu_\partGeneric$ gathers the $X$-marginals of the problems the Sinkhorn solver is handling: an individual problem for \DomDecMPI, the global problem for \SinkhornGPU, or a collection of problems (possibly all of them) for \DomDecGPU. We use $\tn{Err} = 10^{-4}$. For comparison, for \SinkhornGPU\ we test in addition the value $\tn{Err} = 10^{-3}$.
Partial marginals $(\nu_i)_{i\in I}$ are truncated at $10^{-15}$ and stored as sparse vectors in \DomDecMPI, or in bounding box structures in \DomDecGPU.

\paragraph{Test data.} We use the same problem data as in \cite{BoSch2020}: Wasserstein-2 optimal transport on 2D images with dimensions $2^n \times 2^n$, with $n$ ranging from $n=6$ to $n = 11$, i.e.~images between the sizes $64 \times 64$ to $2048 \times 2048$. The images are Gaussian mixtures with random variances, means and magnitudes. This represents challenging problem data, featuring strong compression, expansion and anisotropies. For each problem size we generated 10 test images, i.e.~45 pairwise non-trivial transport problems, and average the results. Examples of the images are shown in Figure \ref{fig:ExampleImages}.

\begin{figure}[hbt]
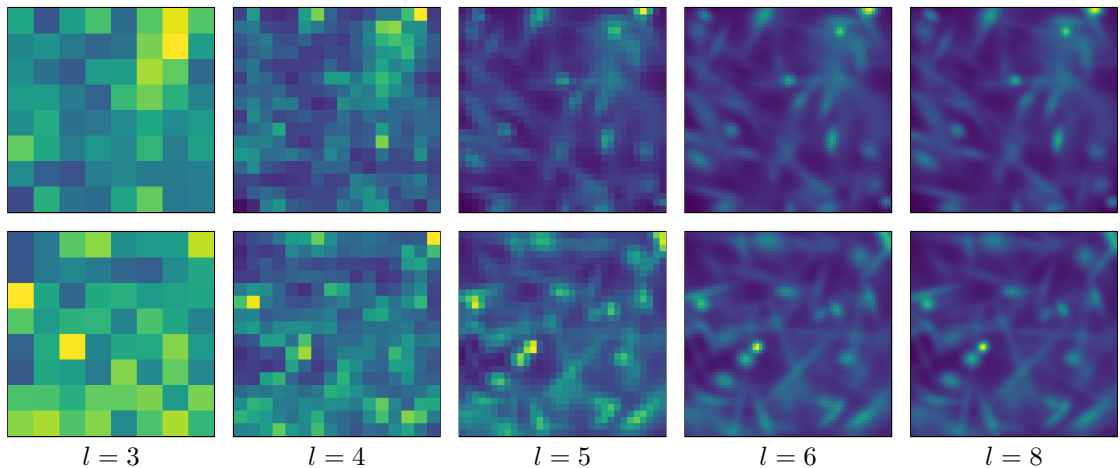

	\centering
	{\def\imgw{2.7cm}
		\begin{tikzpicture}[x=\imgw,y=\imgw,img/.style={inner sep=0pt,draw=black,line width=1pt,anchor=north west}]
		\foreach \x/\y/\l in {0/0/3,1/0/4,2/0/5,3/0/6,4/0/8} {
			\node[img] at (1.1*\x,-1.1*\y){\includegraphics[width=\imgw]{fig/atomic_cells/muX_l\l.png}};
		}
		\begin{scope}[shift={(0,-1.1)}]
		\foreach \x/\y/\l in {0/0/3,1/0/4,2/0/5,3/0/6,4/0/8} {
			\node[img] at (1.1*\x,-1.1*\y)[label=below:{$l=\l$}]{\includegraphics[width=\imgw]{fig/atomic_cells/muY_l\l.png}};
		}
		\end{scope}
		\end{tikzpicture}
	}
	\caption{Two example images for size $256 \times 256$, at different layers, $l=8$ being the original. }
	\label{fig:ExampleImages}
\end{figure}

\paragraph{Multi-scale and $\veps$-scaling.} All algorithms implement the same strategy for multi-scale and $\veps$-scaling as in \cite[Section 6.4]{BoSch2020}. At every multiscale layer, the initial regularization strength is $\varepsilon = 2(\Delta x)^2$, where $\Delta x$ stands for the pixel size, and the final value is $\varepsilon = (\Delta x)^2 / 2$. In this way, the final regularization strength on a given multiscale layer coincides with the initial one in the next layer. Finally, on the finest layer $\varepsilon$ is decreased further to the value $(\Delta x)^2 / 4$ implying a relatively small residual entropic blur.

\paragraph{Double precision.}
All algorithms use double floating-point precision. 
For single precision we observed a degradation in the accuracy for problems with $N \gtrsim 256$. 
The cause for this behavior is that, for the chosen objective error $\rm Err$ and regularization strengths $\veps$, the update to the dual potentials according to the Sinkhorn iterations eventually has a relative magnitude below the machine precision of single floating-point precision ($2^{-24}$). Consequently, the Sinkhorn iterations using single precision may stall before achieving the required solution quality.

\begin{figure}[hbt]
	\includegraphics[width=\textwidth]{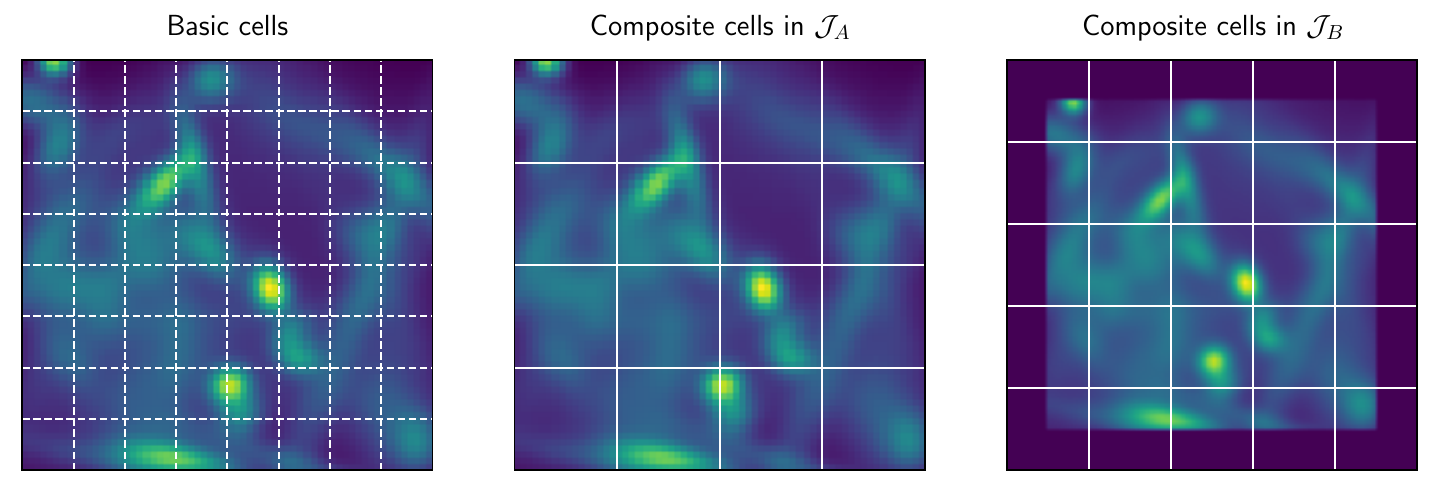}
	\caption{Basic and composite cells for $N = 64$, $s = 8$. For $B$ cells we pad the image with a margin of width $s$.}
	\label{fig:basic-and-composite}
\end{figure}

\paragraph{Basic and composite partitions.} 
We generate basic and composite partitions as in \cite{BoSch2020}: for the basic partition we divide each image into blocks of $s\times s$ pixels (where $s$ is a divisor of the image size), while composite cells are obtained by grouping cells of the same size, as shown in Figure \ref{fig:basic-and-composite}.
In the GPU version, for the $B$ cells we pad the image boundary with a region of very low constant mass, such that all composite cells have the same size, to simplify compatibility.
The choice of $s$ entails a performance trade-off. Larger $s$ implies fewer problems and fewer domain decomposition iterations until convergence, but increased complexity in the individual sub-problems.
As in \cite{BoSch2020}, the choice of cell size $s=4$ yields the best results.

\paragraph{Batching and clustering} As anticipated in Section \ref{sec:gpu-domdec}, solving all composite cell problems simultaneously becomes challenging for big resolutions; in fact without further adjustments we run out of memory for $N = 2048$.
Therefore we cluster subproblems according to bounding box size and solve these batches sequentially.
In Figure \ref{fig:clustering} we show an example of this clustering, as well as the dependence of runtime with the number of clusters for several resolutions (computed on a subset of our problem data). We obtained the best results using a single cluster for $N \le 256$ and then scaling the number of clusters linearly in $N$. We will use this heuristic criterion for our subsequent experiments.

\begin{figure}[H]
	\centering
	\includegraphics[width=0.9\linewidth]{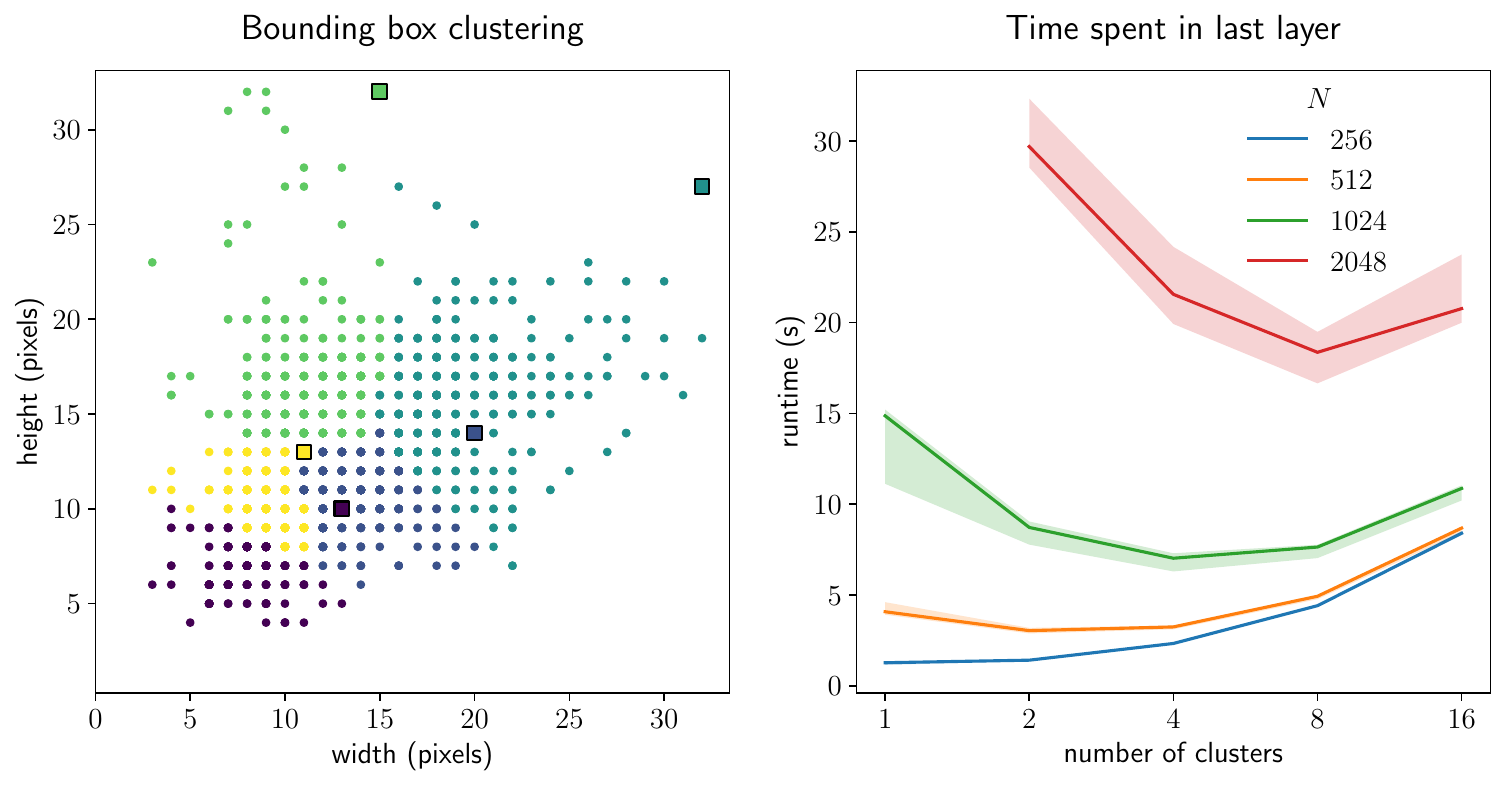}
	\caption{Left, every point represents the width and height (in numbers of pixels) of one composite cell $Y$-marginal, in a problem with $N = 1024$, $s = 4$. The coloring represents the label after performing a K-means clustering. The squared markers indicate the dimensions of the resulting bounding boxes. Right, time spent in the last \DomDecGPU\ multiscale layer, depending of the number of clusters. The solid line represents the median time, while the shading extent represents the range between the 0.25 and 0.75 quantiles. The optimal number of clusters approximately seems to be given by $N / 256$.}
	\label{fig:clustering}
\end{figure}

\subsubsection{Results}

\paragraph{Runtime.}
Results of the numerical experiments are summarized in Table \ref{tab:benchmark} and the total runtimes are visualized in Figure \ref{fig:total-time-benchmark}.
As can be seen, on large \SinkhornGPU\ struggles to solve large problems.
Increasing the error tolerance for the marginal in the stopping criterion for \SinkhornGPU\ error shifts the runtime curve but does not change the scaling with respect to $N$.
In contrast, both domain decomposition implementations exhibit a better scaling of the runtime with respect to $N$.
The runtime of \DomDecMPI\ (with 16 workers) is approximately linear in the number of pixels $N^2$, as reported \cite{BoSch2020}.
The new implementation \DomDecGPU\ is consistently faster and by far the fastest algorithm on the largest test problems by approximately one order of magnitude. The optimal transport between two megapixel images is computed in approximately 11 seconds. In the observed range the runtime scales sublinearly in $N^2$. It seems possible that this is due to the overhead of parallelization (similar to \DomDecMPI\ on small problems) and that it will transition to approximately linear scaling on even larger problems.

As reported for \DomDecMPI\ in \cite{BoSch2020}, \DomDecGPU\ spends most time on solving the cell problems (around 70-80\%). The remaining time is mainly spent mostly on handling the cell marginals in the methods $\textsc{BasicToComposite}$, $\textsc{CompositeToBasic}$, and $\textsc{BalanceGPU}$. The refinement between multiscale layers and the clustering of the cell problems are relatively cheap steps, using up respectively only 1\% and 2\% of the total runtime for the finest resolutions.

\begin{table}[h]
	\begin{tabular}{lrrrrrr}
\toprule
image size  &    64$\times$64   &    128$\times$128  &    256$\times$256  &    512$\times$512  &     1024$\times$1024 &    2048$\times$2048 \\
\midrule
\multicolumn{7}{l}{total runtime (s)}
\\
\midrule
\DomDecGPU &    1.68 &    2.05 &    3.08 &     5.9 &     11.3 &    29.3 \\
\DomDecMPI &    1.51 &    3.05 &    8.52 &    30.5 &      130 &     561 \\
\SinkhornGPU &    1.73 &    2.77 &    7.08 &    53.7 &      483 &     - \\
\SinkhornGPU\ (Err = $10^{-3}$) &   0.407 &   0.569 &    1.29 &    8.43 &     69.4 &     - \\
\midrule
\multicolumn{7}{l}{time spent in \DomDecGPU's subroutines (s)}
\\
\midrule
sinkhorn subsolver &    1.21 &    1.38 &    2.21 &    4.56 &     9.06 &    22.5 \\
bounding box processing &   0.173 &   0.275 &   0.341 &   0.523 &    0.992 &    3.98 \\
balancing &   0.044 &  0.0752 &    0.15 &   0.351 &    0.644 &    1.38 \\
minibatching and clustering  &   0.105 &   0.127 &   0.151 &   0.183 &     0.24 &   0.417 \\
multiscale refining & 2.2e-02 & 2.9e-02 &  0.0379 &  0.0561 &   0.0985 &   0.296 \\
 \midrule
 \multicolumn{7}{l}{\DomDecGPU\ solution quality}
 \\
 \midrule
$L^1$ marginal error $Y$ & 7.5e-12 & 3.5e-11 & 1.5e-10 & 6.1e-10 &  2.5e-09 & 1.0e-08 \\
$L^1$ marginal error $X$ & 9.2e-05 & 9.3e-05 & 9.3e-05 & 9.6e-05 &  9.3e-05 & 1.1e-04  \\
relative primal-dual gap & 4.2e-05 & 2.6e-05 & 3.9e-05 & 2.0e-05 &  1.4e-05 & 1.2e-05 \\
\midrule
\multicolumn{7}{l}{\SinkhornGPU\ solution quality}
\\
\midrule
$L^1$ marginal error $X$ & 1.0e-04 & 1.0e-04 & 1.0e-04 & 1.0e-04 &  1.0e-04 &     - \\
relative dual score & 3.3e-05 & 2.1e-05 & 3.0e-05 & 1.5e-05 &   1.0e-05 &     - \\
\bottomrule
\end{tabular}

	\caption{Summary of the average performance for the examined solvers. For each image size results are averaged over 45 problem instances (Section \ref{sec:gpu-domdec}).}
	\label{tab:benchmark}
\end{table}

\paragraph{Sparsity.}
In \DomDecMPI\ the basic cell marginals are stored in a sparse data structure. As outlined above, as GPUs work most efficiently on contiguous data, \DomDecGPU\ uses a bounding box data structure. While this can still benefit from sparsity, it is not as memory efficient as in \DomDecMPI.
As shown in Figure \ref{fig:boundingbox}, the bounding boxes inflict some memory overhead that is increasing with the problem size.
While the number of non-zero entries increases approximately as the marginal size $N^2$ (as observed for \DomDecMPI\ in \cite{BoSch2020}), the number of stored entries grows faster approximately by a factor $\sqrt{N}$.
This is currently the limiting factor for further increasing the problem size for \DomDecGPU. This can presumably be improved by more sophisticated clustering and batching of the cell problems.

\begin{figure}[h]
	\centering
	\includegraphics[width=0.9\linewidth]{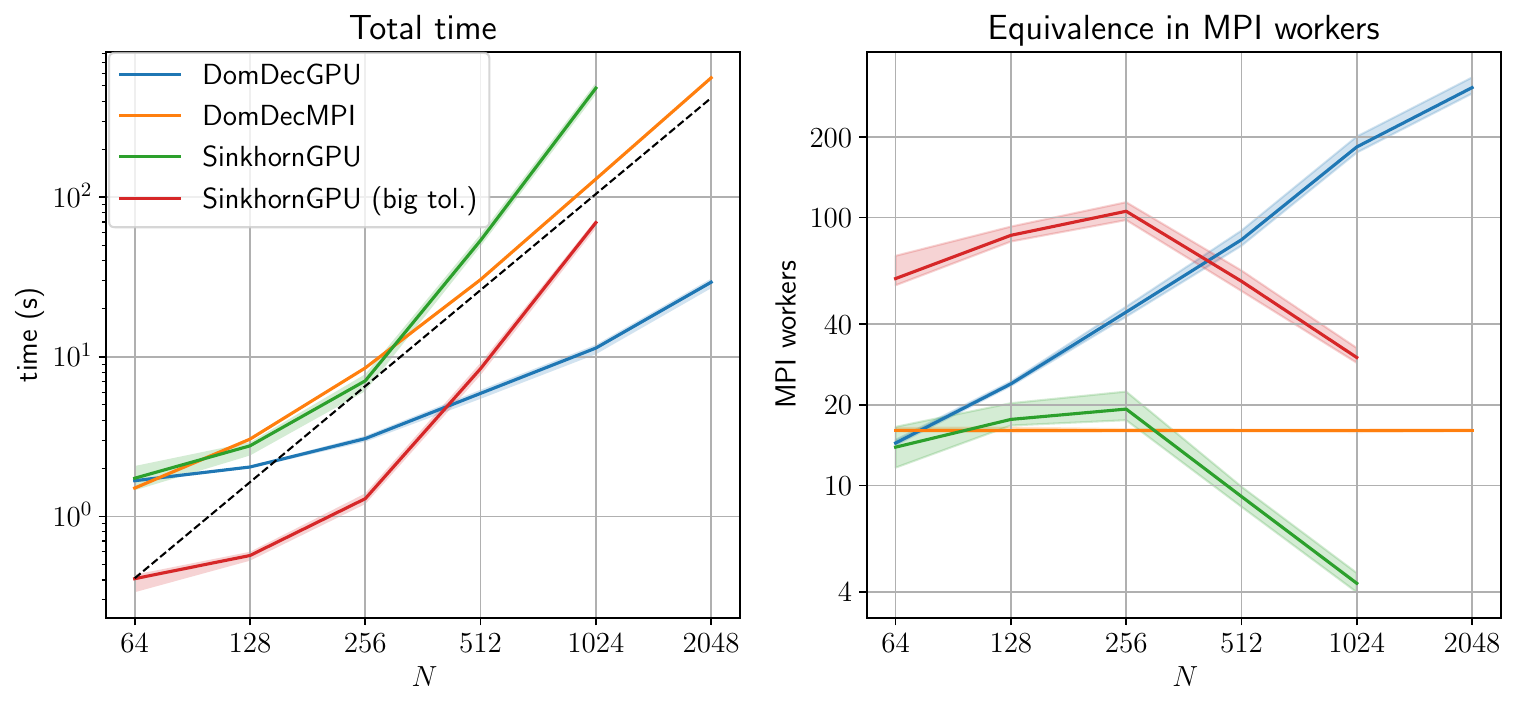}
	\caption{Left, runtime comparison between different solvers. The solid line and shading represent the median and the 0.25 and 0.75 quantiles. The \textsc{SinkhornGPU} solver with higher error tolerance has a looser $X$-marginal error objective of $\rm Err = 10^{-3}$.
		The dashed line is proportional to the marginal size, i.e.~$N^2$.
		Right, runtimes normalized to the \DomDecMPI\ runtime to show a rough equivalence of the performance in MPI workers.}
	\label{fig:total-time-benchmark}
\end{figure}

\begin{figure}[H]
	\centering
	\includegraphics[width=0.9\linewidth]{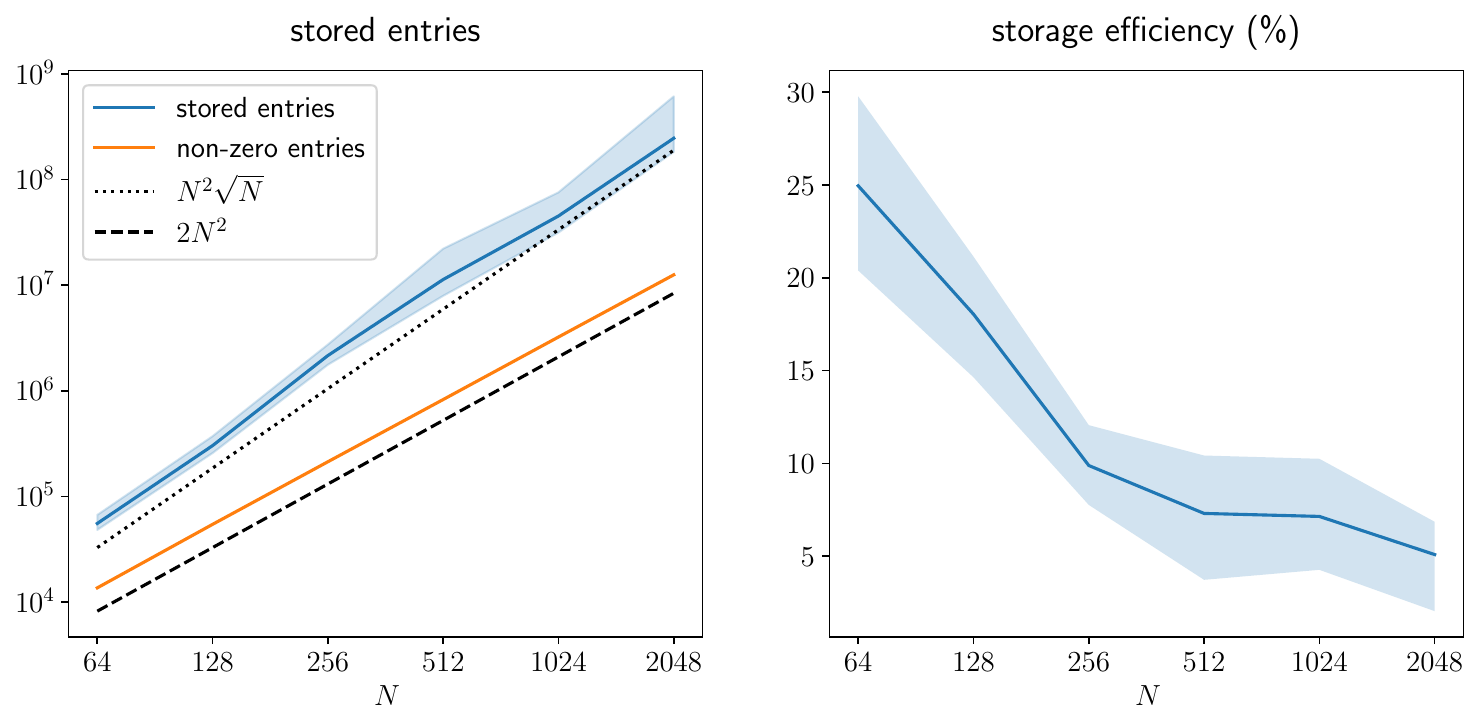}
	\caption{Left, number of stored and non-zero entries in the final plan. The number of non-zero entries grows linearly with the problem size $N^2$, while the dimensions of the bounding box structure grow slightly faster. Right, storage efficiency (proportion of stored entries that are positive) over resolution.}
\end{figure}

\paragraph{Solution quality.} We evaluate the solution quality of \DomDecGPU\ by measuring the violation of the transport plan marginal constraints and with the relative primal-dual gap
\begin{equation}
\frac{E(\pi) - D(\alpha^\dagger, \beta^\dagger)}{D(\alpha^\dagger, \beta^\dagger)},
\quad \tn{with }
\begin{cases}
E(\pi) \assign 
\la c, \pi \ra 
+
\veps \KL(\pi | \mu \otimes \nu), 
\\[2mm]
D(\alpha, \beta) \assign 
\la \alpha, \mu \ra 
+ 
\la \beta, \nu \ra 
+
\veps \left\langle 1 - \exp\left(\tfrac{\alpha \oplus \beta - c}{\veps}\right), \mu \otimes \nu \right\rangle.
\end{cases}
\end{equation}
where $\alpha^\dagger$ and $\beta^\dagger$ are the dual domain decomposition potentials, which are obtained by applying the stitching procedure described in \cite[Section 6.3]{BoSch2020}.

For comparison with the \SinkhornGPU\ solver we use the relative dual score
\begin{equation}
\frac{D(\alpha^*, \beta^*) - D(\alpha^\dagger, \beta^\dagger) }{D(\alpha^\dagger, \beta^\dagger) },
\end{equation}
where $\alpha^*$ and $\beta^*$ the final dual potentials obtained by \SinkhornGPU.

We observe that \DomDecGPU\ generates solutions of high quality. The relative PD gap is always smaller than $10^{-5}$. The idealized algorithm would by design perfectly satisfy the $Y$-marginal constraints. Deviations are purely due to finite floating point precision. Indeed, numerically the violation is very small, growing with the problem size, and is still below $10^{-8}$ on the largest test problems. The $X$ marginal error is approximately $10^{-4}$, consistent with the stopping criterion for the Sinkhorn algorithm on the cell problems.

Upon convergence with $\tn{Err}=10^{-4}$, the relative primal dual gap is slightly positive, indicating that the global Sinkhorn solver produced somewhat more precise dual variables. But this deviation is on the order of the PD gap of \DomDecGPU\ and thus not substantial.
However, as shown above this small increase in precision comes a a substantial cost in terms of runtime.

\paragraph{Conclusion.} Previous work has shown that global Sinkhorn solvers struggle to achieve a good precision in large optimal transport problems, and that domain decomposition is an efficient alternative. Our new GPU implementation of domain decomposition outperforms the previous CPU implementation and a monolithic GPU Sinkhorn implementation. On megapixel-sized images, \DomDecGPU\ achieves a performance comparable to more than 200 MPI workers using a single GPU.
An important task for future work is an improved memory efficiency for the bounding box data structure and subproblem clustering.

\end{document}